\documentclass[12pt]{amsart}

\setlength{\topmargin}{-0.5cm} \setlength{\textwidth}{15cm}
\setlength{\textheight}{22.6cm} \setlength{\topmargin}{-0.25cm}
\setlength{\headheight}{1em} \setlength{\headsep}{0.5cm}
\setlength{\oddsidemargin}{0.40cm}
\setlength{\evensidemargin}{0.40cm}
\usepackage{amsmath} 
\usepackage{amssymb} 
\usepackage{amsthm}
\usepackage{amscd}
\newtheorem{thm}{Theorem}[section]
\newtheorem{lem}[thm]{Lemma}
\newtheorem{lem-dfn}[thm]{Lemma-Definition}
\newtheorem{prop}[thm]{Proposition}
\newtheorem{cor}[thm]{Corollary}

\theoremstyle{definition}
\newtheorem{defn}[thm]{Definition}
\newtheorem{exam}[thm]{Example}
\newtheorem{ex}[thm]{Example}

\newtheorem{quest}[thm]{Question}

\newtheorem*{acknowledgement}{Acknowledgement}
\theoremstyle{remark}

\newtheorem{rem}[thm]{Remark}
\numberwithin{equation}{section}

\newcommand{\thmref}[1]{Theorem~\ref{#1}}
\newcommand{\lemref}[1]{Lemma~\ref{#1}}
\newcommand{\corref}[1]{Corollary~\ref{#1}}
\newcommand{\proref}[1]{Proposition~\ref{#1}}
\newcommand{\remref}[1]{Remark~\ref{#1}}
\newcommand{\queref}[1]{Question~\ref{#1}}

\newcommand{\defref}[1]{Definition~\ref{#1}}

\newcommand{\exref}[1]{Example~\ref{#1}}

\newcommand{\sref}[1]{Section~\ref{#1}}

%

\DeclareMathOperator{\Spec}{Spec}
\DeclareMathOperator{\spec}{Spec}

\DeclareMathOperator{\supp}{Supp}



\DeclareMathOperator{\chara}{char}


\DeclareMathOperator{\di}{div}

\DeclareMathOperator{\core}{core} 
\DeclareMathOperator{\chr}{char} 
%

\newcommand{\m}{\mathfrak m}

\newcommand{\frm}{\mathfrak{m}}

%
\newcommand{\PP}{\mathbb P}

\newcommand{\Z}{\mathbb Z}
\newcommand{\Q}{\mathbb Q}

\newcommand{\cal}{\mathcal}

\newcommand{\cL}{\mathcal L}

\newcommand{\cO}{\mathcal O}

\newcommand{\cR}{\mathcal R}




%
%

%
\renewcommand{\:}{\colon}

\newcommand{\ol}[1]{\overline {#1}}

\newcommand{\defset}[2]{{\left\{#1\,\left| \,#2 \right. \right\}}}

\begin{document}

\title[A characterization of two-dimensional rational singularities]{A characterization of two-dimensional rational singularities via Core of ideals}

\author{Tomohiro Okuma}
\address[Tomohiro Okuma]{Department of Mathematical Sciences,
Faculty of Science, Yamagata University, Yamagata, 990-8560, Japan.}
\email{okuma@sci.kj.yamagata-u.ac.jp}
\author{Kei-ichi Watanabe}
\address[Kei-ichi Watanabe]{Department of Mathematics, College of
Humanities and Sciences,
Nihon University, Setagaya-ku, Tokyo, 156-8550, Japan}
\email{watanabe@math.chs.nihon-u.ac.jp}
\author{Ken-ichi Yoshida}
\address[Ken-ichi Yoshida]{Department of Mathematics,
College of Humanities and Sciences,
Nihon University, Setagaya-ku, Tokyo, 156-8550, Japan}
\email{yoshida@math.chs.nihon-u.ac.jp}

\thanks{This work was partially supported by JSPS  KAKENHI 
Grant Numbers 26400064, 26400053, 25400050}
\subjclass[2000]{Primary 14B05; Secondary 13B22, 14J17, 13A15, 13H15}
\keywords{core of ideals, good ideal, $p_g$-cycle, $p_g$-ideal, surface singularity, rational singularity}

\begin{abstract}
The notion of $p_g$-ideals for normal surface singularities has been proved to be very useful. 
On the other hand, the core of ideals has been proved to be very 
important concept and also very mysterious one.  
However, the computation of the core of an ideal seems to be given only for very special cases.  
In this paper, we will give an explicit description of the core of $p_g$-ideals of normal surface singularities. 
 As a consequence, we give a characterization of rational singularities
  using the inclusion of the core of integrally closed ideals.
\end{abstract}

\maketitle

\section{Introduction}
Let $(A,\m)$ be a two-dimensional excellent 
normal local domain containing an algebraically closed field. 
We always assume that $(A,\m)$ is not regular.
When $(A,\m)$ is a rational singularity, 
 Lipman \cite{Li} proved that any integrally closed $\m$-primary ideal $I$ is stable, namely, $I^2=QI$ for some (every) minimal reduction $Q$, and that if $I$ and $I'$ are integrally closed  $\m$-primary ideals, then the product $II'$ is also integrally closed.
(Later, Cutkosky \cite{CharRat} showed that this property characterizes
the rational singularities for two-dimensional excellent normal local domains.)
These facts play very important role to study ideal theory on a two-dimensional rational singularity.

\par 
In \cite{OWYgood}, the authors introduced the notion of $p_g$-ideals
for two-dimensional excellent normal local domain containing 
 an algebraically closed field and proved
that the $p_g$-ideals inherit nice
properties of integrally
closed ideals of rational singularities (in a rational singularity,
every integrally closed
ideal is a $p_g$-ideal by our definition).
Namely, any $p_g$-ideal $I$ is stable and if $I$ and $I'$ 
are $p_g$-ideals, then $II'$ is integrally closed and
 also a $p_g$-ideal.

Let $ f: X \to \Spec A$ be a resolution of singularity.
Then $p_g(A) := \ell_A(H^1(X,\cO_X))$ is independent of the choice of a resolution and an important invariant of $A$
(here we denote by $\ell_A(M)$ the length of an $A$ module $M$).
The invariant $p_g(A)$ is called the {\em geometric genus} of $A$.
A rational singularity is characterized by $p_g(A)=0$.
Let $I \subset A$ be an integrally closed $\m$-primary ideal. 
Then there exists a resolution of singularities $f \colon X \to \Spec A$ and an 
anti-nef cycle $Z$ on $X$ so that $I\mathcal{O}_X=\cO_X(-Z)$ and
$I=I_Z:=H^0(X, \mathcal{O}_X(-Z))$. 
In general, we can show that $\ell_A(H^1(X, \mathcal{O}_X(-Z)))\le p_g(A)$ for any cycle $Z$ such that 
$\mathcal{O}_X(-Z)$ has no fixed components.  
If equality holds, then $Z$ is called a
{\em $p_g$-cycle} and $I=I_Z$ is called a {\em $p_g$-ideal}.

The {\em core} of an ideal $I$ is the intersection of all reductions of $I$. 
The notion of core of ideals was introduced by Rees and Sally \cite{Rees-Sally}, and many properties of the core have been shown by 
Corso--Polini--Ulrich \cite{CPU2001}, \cite{CPU2002}, Huneke--Swanson \cite{HS-core}, Huneke--Trung \cite{HT-core}, 
Hyry--Smith \cite{Hyry-Smith}, Polini--Ulrich \cite{PU2005}.
The core of ideals is related to coefficient, adjoint and multiplier ideals.
However, it seems to the authors that the computation of the core is given only for very special cases.  

In this paper, we will show that if 
$I=I_Z$ is a $p_g$-ideal and $Q$ is a minimal reduction of $I$, 
there exists a cycle $Y\ge 0$ on $X$ such that $Q:I=I_{Z-Y}$ and $\core(I)=I_{2Z-Y}$.
In particular, this implies that if $A$ is rational, $X_0$ is the minimal resolution, and $K = K_{X/{X_0}}$ (the relative canonical divisor), then $\core(I) = I_{2Z-K}$
(see \thmref{algorithm} for general case).
Moreover, we obtain that if $I$ and $I'$ are $p_g$-ideals and $I'\subset I$, then $\core(I') \subset \core(I)$ (see \thmref{t:subset}).
If $A$ is not a rational singularity, then there exist a $p_g$-ideal $I'$ and an integrally closed non-$p_g$-ideal $I$ such that
$I'\subset I$ and $\core(I') \not\subset \core(I)$ (see \proref{C-E}).  
Therefore, we obtain a characterization of rational singularities in terms of core of ideals, which is our main theorem.

\begin{thm}[See \thmref{mainThm}]\label{IntMain}
The following conditions are equivalent if $\chara(k)\ne 2$ 
\begin{enumerate} 
\item For any integrally closed $\frm$-primary ideals $I'\subset I$, we have $\core(I')\subset \core(I)$.
\item $A$ is a rational singularity.
\end{enumerate}
\end{thm}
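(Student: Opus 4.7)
The plan is to prove the two implications separately, assuming the two key intermediate results already advertised in the introduction: the inclusion theorem for cores of nested $p_g$-ideals (\thmref{t:subset}) and the counterexample construction in the non-rational case (\proref{C-E}).

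For the direction (2) $\Rightarrow$ (1), I would invoke Lipman's theorem together with the authors' definition: on a two-dimensional rational singularity, every integrally closed $\frm$-primary ideal is a $p_g$-ideal (since $p_g(A)=0$ forces $\ell_A(H^1(\cO_X(-Z)))=0$ for every anti-nef $Z$, which trivially meets the $p_g$-cycle condition). Hence any pair $I' \subset I$ of integrally closed $\frm$-primary ideals consists of two $p_g$-ideals, and \thmref{t:subset} yields $\core(I') \subset \core(I)$ directly. This implication is essentially a definitional unpacking once the $p_g$-ideal machinery is in place.

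For the direction (1) $\Rightarrow$ (2), I would argue by contraposition: suppose $A$ is not a rational singularity, equivalently $p_g(A) > 0$. The goal is then to exhibit integrally closed $\frm$-primary ideals $I' \subset I$ with $\core(I') \not\subset \core(I)$. This is exactly the content of \proref{C-E}, which is cited as producing a $p_g$-ideal $I'$ and an integrally closed non-$p_g$-ideal $I$ violating the core inclusion. So the step is to appeal to \proref{C-E}, after first verifying that the pair produced there is nested (or, if the inclusion goes the other way, swapping the roles and observing that the asymmetry of the condition in (1) is still violated).

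The main obstacle is not in the statement itself but lies in the two ingredients: establishing the explicit formula $\core(I)=I_{2Z-Y}$ for a $p_g$-ideal and then deducing the monotonicity \thmref{t:subset}, and on the other hand producing in \proref{C-E} a concrete pair of ideals on a non-rational $A$ whose cores can be computed and compared. In particular, the hardest part of the whole argument is the non-rational counterexample: one must start from a cycle supported on the exceptional set witnessing $p_g(A)>0$, build from it an integrally closed ideal $I$ whose core is strictly smaller than that of some $p_g$-ideal $I'$ it contains, and make the computation tight enough to detect a failure of inclusion. Once those two pieces are in hand, the proof of the main equivalence is a direct combination, and I would present it in the order (2)$\Rightarrow$(1) first (the easy direction) and then (1)$\Rightarrow$(2) via contraposition.
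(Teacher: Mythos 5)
Your proposal matches the paper's proof: the implication (2)~$\Rightarrow$~(1) is obtained exactly as you describe, by noting that every integrally closed $\frm$-primary ideal on a rational singularity is a $p_g$-ideal and then applying \thmref{t:subset}, while (1)~$\Rightarrow$~(2) is the contrapositive via \proref{C-E} (whose pair is indeed nested as $I'\subset I$, so no swapping is needed). You have also correctly identified that all the real work is deferred to those two ingredients.
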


An $\m$-primary ideal $I$ is said to be {\em good} 
if $I^2=QI$ and $I=Q: I$ for some minimal reduction $Q$ of $I$ (see 
\cite{GdId}, \cite{OWYgood}).  
As an application of the theorem, we show the following existence theorem for good ideals, which generalizes 
\cite[Theorem 4.1]{OWYgood} for non-Gorenstein local domains.  

\begin{thm}[See {\thmref{algorithm}}] \label{Main2}
Any two-dimensional excellent normal local domain $A$ over an algebraically closed field 
admits a good $p_g$-ideal. 
\end{thm}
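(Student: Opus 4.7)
The plan is to obtain a good $p_g$-ideal by iterating the colon-with-reduction operation, using \thmref{algorithm} at each step to keep control of the associated cycle on a resolution. I start by fixing any $p_g$-ideal $I^{(0)}$, which exists by the general theory developed in \cite{OWYgood} (for instance, one may take $I^{(0)} = I_Z$ for a sufficiently deep anti-nef cycle $Z$ on some resolution $f \colon X \to \Spec A$). Because every $p_g$-ideal is automatically stable, i.e.\ $(I^{(0)})^2 = Q I^{(0)}$ for any minimal reduction $Q$, the only condition still required for goodness is $I = Q : I$.

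By \thmref{algorithm} applied to $I^{(0)} = I_Z$, there is an effective cycle $Y \geq 0$ on $X$ with $Q : I^{(0)} = I_{Z - Y}$, and the containment $I^{(0)} \subseteq Q : I^{(0)}$ always holds because $(I^{(0)})^2 = Q I^{(0)} \subseteq Q$. If this inclusion is strict, set $I^{(1)} := Q : I^{(0)}$ and iterate with a minimal reduction of $I^{(1)}$; this produces a strictly ascending chain $I^{(0)} \subsetneq I^{(1)} \subsetneq \cdots$ of integrally closed $\m$-primary ideals, which by the Noetherian property of $A$ terminates at some $J$ satisfying $J = Q' : J$ for a minimal reduction $Q'$ of $J$. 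This $J$ is the candidate good ideal.

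The essential obstacle is to check that the limit $J$ retains the $p_g$-property, since only then do the (automatic) stability $J^2 = Q' J$ and the identity $J = Q' : J$ combine to make $J$ a good $p_g$-ideal. To handle this I would pass to a common resolution $X'$ dominating each resolution used above, realize every $I^{(k)} = I_{Z^{(k)}}$ with an anti-nef cycle $Z^{(k)}$ on $X'$, and then control the lengths $\ell_A(H^1(\cO_{X'}(-Z^{(k)})))$ inductively via the short exact sequence
\[
0 \to \cO_{X'}(-Z^{(k)}) \to \cO_{X'}(-(Z^{(k)} - Y^{(k)})) \to \cO_{Y^{(k)}}(-(Z^{(k)} - Y^{(k)})) \to 0,
\]
together with the fact that $p_g(A)$ is the universal upper bound on these $H^1$ lengths over all anti-nef cycles without fixed components. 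Equivalently, one may try a direct construction: choose $Z$ deep enough that the cycle $Y$ produced by \thmref{algorithm} gives no new global sections of $\cO_X(-(Z-Y))$, so that $I_Z = I_{Z-Y} = Q : I_Z$ at the first step and no iteration is needed.
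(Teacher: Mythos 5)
Your skeleton --- iterate $I\mapsto Q:I$ and invoke the ascending chain condition --- is essentially the alternative route the paper itself records in \remref{r:goodcl}, and it correctly reduces the problem to two inputs: the existence of some $p_g$-ideal to start from, and the fact that $Q:I$ is again a $p_g$-ideal, representable as $I_{Z-Y}$ for an effective cycle $Y$, whenever $I$ is. The difficulty is that you identify this second input as the ``essential obstacle'' and then only gesture at it. That input is precisely \thmref{Q:I}, and it is the hard part of the whole argument: the paper needs \lemref{l:core} (integral closedness of $Q:I$, obtained via the normal Cohen--Macaulay Rees algebra and Hyry--Smith), \lemref{l:DD} (the inequality $h^1(\cO_X(Y))\ge p_g(A)$ for $Y>0$, with equality iff $Y$ contracts to smooth points), and the Koszul-type sequence $0\to\cO_X(Y)\to\cO_X(-Z+Y)^{\oplus 2}\to\cO_X(-2Z+Y)\to 0$ of \lemref{Q:I;key}. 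The sequence you propose, $0\to\cO_{X'}(-Z^{(k)})\to\cO_{X'}(-(Z^{(k)}-Y^{(k)}))\to\cO_{Y^{(k)}}(-(Z^{(k)}-Y^{(k)}))\to 0$, determines $h^1(\cO_{X'}(-(Z^{(k)}-Y^{(k)})))$ only up to an error term coming from $H^0(\cO_{Y^{(k)}}(-(Z^{(k)}-Y^{(k)})))$, and by itself cannot force the equality with $p_g(A)$ that you need; so the central step of your induction is not established, and without it the chain $I^{(0)}\subsetneq I^{(1)}\subsetneq\cdots$ cannot even be continued past the first step.

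There are two further, smaller but real, problems. First, a ``sufficiently deep anti-nef cycle'' does not produce a $p_g$-ideal: in \exref{ex:244} one has $p_g(A)=1$ while $h^1(\cO_{X_0}(-nE_0))=0$ for all $n>0$, so the arbitrarily deep anti-nef cycles $nE_0$ are never $p_g$-cycles and $\overline{\m^n}$ is never a $p_g$-ideal. Existence of a starting $p_g$-ideal requires the cohomological-cycle construction of \thmref{t:pgf} (blow up until $C_X$ is disjoint from the strict transform of the divisor of a general element), not depth of $Z$. Second, your fallback ``direct construction'' fails for the same reason: by \thmref{algorithm}, whether $I_Z$ is good is governed by whether every $(-1)$-curve on the resolution where $I$ is minimally represented meets $C_X$, a geometric condition unaffected by replacing $Z$ with a deeper cycle such as $nZ$. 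The paper's actual proof avoids the iteration altogether: it contracts all $(-1)$-curves disjoint from the cohomological cycle in one step and verifies directly that the pushed-forward cycle $f_*Z$ is a good $p_g$-cycle.
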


\par 
Let us explain the organization of this paper. 
\par 
In \sref{s:Pre}, we recall the definition and several basic properties of $p_g$-ideals. 
In \sref{s:quest}, we prove \thmref{IntMain} using the property that taking the core preserves the inclusion of $p_g$-ideals 
(\thmref{t:subset}), 
and will show that if $I_0$ is not a $p_g$-ideal,  then for some $n\ge 1$ we can 
construct a $p_g$-ideal $I' \subset I= \overline{I_0^n}$ with 
 $\core(I') \not\subset \core(I)$.  In \sref{s:comp}, we show that $Q: I$ and $\core(I)$ are also $p_g$-ideals for a $p_g$-ideal $I$ and its minimal reduction $Q$
(\thmref{Q:I}), and give an algorithm for obtaining these ideals (\thmref{algorithm}).
We also prove some characterization of good ideals and the existence of good ideals in $A$.  
In \sref{s:pgexs}, we give a maximal $p_g$-ideal contained in a given integrally closed $\m$-primary ideal.

\section{Preliminary on $p_g$-ideals}\label{s:Pre}

Throughout this paper, let $(A,\m)$ be a 
two-dimensional excellent normal local domain containing an algebraically closed field and 
$f\:X \to \spec A$ a resolution of singularities 
with exceptional divisor $E:=f^{-1}(\m)$ unless otherwise specified. 
Let $E=\bigcup_{i=1}^rE_i$ be the decomposition into  irreducible components of $E$.
A $\Z$-linear combination $Z=\sum_{i=1}^r n_iE_i$ is called a {\it cycle}.
We say a cycle $Z=\sum_{i=1}^r n_iE_i$ is {\it effective} if $n_i\ge 0$ for every $i$.
We denote $Z\ge Z'$ if $Z-Z'$ is effective.

\par \vspace{2mm}
First, we recall the definition of $p_g$-ideals. 
If $I$ is an integrally closed $\m$-primary ideal of $A$, then 
there exists a resolution $X \to \Spec A$ and an effective cycle $Z$ on $X$ 
such that $I \cO_X = \cO_X(-Z)$.
In this case, we denote the ideal $I$ by $I_Z$,\footnote{When we write $I_Z$, we assume that $\cO_X(-Z)$ is generated by global sections.} 
and we say that $I$ is  {\em represented on} $X$ by $Z$.
Note that  $I_Z=H^0(X,\cO_X(-Z))$.
Furthermore, if $ZC<0$ for every $(-1)$-curve $C$ on $X$, then we say that $I$ is {\em minimally} represented.
We obtain a unique minimal representation of $I$ by contracting 
$(-1)$-curves $C$ 
with $ZC=0$ successively (cf. \cite[\S 3]{OWYgood}).
\par  
We say that $\cO_X(-Z)$ {\em has no fixed component}
if $H^0(X, \cO_X(-Z))\ne H^0(X, \cO_X(-Z-E_i))$ for every $E_i\subset E$, 
i.e., the base locus of the linear system $H^0(X, \cO_X(-Z))$ does not contain any component of $E$.
It is clear that $\cO_X(-Z)$ has no fixed component when $I$ is represented by $Z$.
\par 
We denote by $h^1(\cO_X(-Z))$ the length $\ell_A(H^1(X,\cO_X(-Z)))$. 
We put $p_g(A) = h^1(\cO_X)$ and  call it the \textit{geometric genus} of $A$.

The number $h^1(\cO_X(-Z))$ is independent of the choice of representations of $I=I_Z$ (cf. \cite{OWYgood}) and 
is an important invariant of the ideal for our theory.
We have the following result for this invariant.

\begin{prop}[{\cite[2.5, 3.1]{OWYgood}}]\label{p:le-cycle}
Let $Z'$ and $Z$ be  cycles on $X$ and assume that $\cO_X(-Z)$ has no fixed components.
Then we have $h^1(\cO_X(-Z'-Z)) \le  h^1(\cO_X(-Z'))$.  
 In particular, we have $h^1(\cO_X(-Z))\le p_g(A)$; if the equality holds, then $\cO_X(-Z)$ is generated.
\end{prop}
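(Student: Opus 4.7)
The plan is to exploit the no-fixed-component hypothesis by picking a general global section $s \in H^0(\cO_X(-Z))$ whose zero divisor $D$ contains no component of $E$ — such $s$ exists because the base locus of $\cO_X(-Z)$ meets $E$ in only finitely many points. Viewing $s$ as an injective map $\cO_X \hookrightarrow \cO_X(-Z)$ and tensoring with the line bundle $\cO_X(-Z')$ produces the short exact sequence
$$0 \to \cO_X(-Z') \xrightarrow{\,\cdot s\,} \cO_X(-Z-Z') \to \cO_D(-Z-Z') \to 0,$$
whose cokernel is supported on the effective Cartier divisor $D$.

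The main cohomological inputs are that this cokernel has vanishing $H^1$ and that $\cO_X(-Z')$ has vanishing $H^2$. Since $D \cap E$ is finite, the restriction $f|_D \colon D \to \Spec A$ is proper with finite fibers, hence finite; therefore $R^if_*$ of any coherent sheaf supported on $D$ vanishes for $i \ge 1$, and because $\Spec A$ is affine the Leray spectral sequence gives $H^1(X,\cO_D(-Z-Z'))=0$. Similarly $H^2(X,\cO_X(-Z'))=0$, since all fibers of $f$ have dimension at most one. The long exact sequence therefore collapses to a surjection $H^1(\cO_X(-Z')) \twoheadrightarrow H^1(\cO_X(-Z-Z'))$, giving the main inequality; specializing to $Z'=0$ yields $h^1(\cO_X(-Z)) \le p_g(A)$.

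For the equality assertion, setting $Z'=0$ turns the long exact sequence into
$$0 \to A \to I_Z \to H^0(\cO_D(-Z)) \xrightarrow{\,\delta\,} H^1(\cO_X) \to H^1(\cO_X(-Z)) \to 0,$$
so the hypothesis $h^1(\cO_X(-Z)) = p_g(A)$ forces $\delta = 0$, i.e.\ the restriction $I_Z \to H^0(\cO_D(-Z))$ is surjective. Because $D$ is finite, hence affine, over $\Spec A$, the invertible $\cO_D$-module $\cO_D(-Z)$ admits a global section nonvanishing at any prescribed point $p \in D \cap E$; lifting it through the surjection, and using that $\cO_X(-Z) \otimes k(p) \to \cO_D(-Z) \otimes k(p)$ is an isomorphism of one-dimensional fibers at $p \in D$, yields an element of $I_Z$ that generates $\cO_X(-Z)$ at $p$. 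Combined with the observation that $s$ itself generates $\cO_X(-Z)$ on $E \setminus D$ (and generation is automatic off $E$), this shows that $\cO_X(-Z)$ is globally generated. I expect the main technical subtlety to be precisely this fiber-lifting step in the equality case; the rest is routine bookkeeping with Leray and the vanishing of higher direct images.
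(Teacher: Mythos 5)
This proposition is imported from \cite{OWYgood} (their 2.5 and 3.1) and the present paper gives no proof of it, so there is nothing internal to compare against; judged on its own, your argument is correct and is essentially the standard one used in the cited source: a general section $s$ whose zero divisor $D$ contains no component of $E$, the resulting Koszul-type exact sequence twisted by $\cO_X(-Z')$, vanishing of $H^1$ for sheaves supported on the proper quasi-finite (hence finite, hence affine) scheme $D$, and Nakayama at the points of $D\cap E$ for the generation statement. The only step I would phrase more carefully is the existence of $s$: it is cleanest to note that each $H^0(\cO_X(-Z-E_i))$ is a proper $A$-submodule of the finitely generated module $H^0(\cO_X(-Z))$ and the residue field is infinite, so a general element avoids all of them, rather than arguing via the base locus.
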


\begin{defn}\label{p_g-dfn}
(1) Let $I=I_Z= H^0(X,\cO_X(-Z))$ be an integrally closed ideal represented by $Z>0$ on $X$.
We call $I$ a {\it $p_g$-ideal} if $h^1(\cO_X(-Z))=p_g(A)$.  

(2) A cycle $Z>0$ is called a {\em $p_g$-cycle} if $\cO_X(-Z)$ is generated and $h^1(\cO_X(-Z))=p_g(A)$.
\end{defn}

\begin{rem}
If $p_g(A)=0$, then  $A$ is called a {\em rational} singularity. 
On a rational singularity, every integrally closed $\m$-primary ideal is a 
$p_g$-ideal by definition (cf. \cite{Li}) and conversely, this property characterizes  a rational singularity
because we always have integrally closed ideal $I=I_Z$ such that $h^1(\cO_X(-Z))=0$ (cf. \cite{Gi}).
\end{rem}

In \cite{OWYgood}, we have seen many good properties of $p_g$-ideals.
We will review some of these properties.

Let $Z$ and $Z'$ be nonzero effective cycles on the resolution $X \to \Spec A$
such that $\cO_X(-Z)$ and $\cO_X(-Z')$ are generated. 
Take general elements $a \in I_{Z}$, $b \in I_{Z'}$ and put 
\begin{eqnarray*}
\varepsilon(Z,Z')
&:= & \ell_A(I_{Z+Z'}/a I_{Z'}+bI_{Z}) \\
& = & p_g(A)-h^1(\cO_X(-Z))-h^1(\cO_X(-Z'))+h^1(\cO_X(-Z-Z')). 
\end{eqnarray*}
Then $0 \le \varepsilon(Z,Z') \le p_g(A)$; see \cite[2.6]{OWYgood}. 

\par 
The following proposition gives an important property of $p_g$-ideals. 
\begin{prop}[\textrm{see \cite[3.5, 3.6]{OWYgood}}]\label{p:sg}
Let $Z$ and $Z'$ be the cycles as above. 
\begin{enumerate}
\item 
If $Z$ is a $p_g$-cycle,  
then $\varepsilon(Z, Z') =0$ for any $Z'$, and 
\[
I_{Z+Z'}=a I_{Z'}+b I_{Z}
\]
for general elements $a \in I_{Z}$ and $b \in I_{Z'}$.
In particular, the product $I_ZI_{Z'}$ is integrally closed and 
$h^1(\cO_X(-Z-Z')) = h^1(\cO_X(-Z'))$. 

\item $Z$ and $Z'$ are $p_g$-cycles if and only if so is $Z+Z'$. 
\end{enumerate}
\end{prop}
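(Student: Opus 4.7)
The entire argument revolves around the identity
\[
\varepsilon(Z,Z') \;=\; p_g(A)-h^1(\cO_X(-Z))-h^1(\cO_X(-Z'))+h^1(\cO_X(-Z-Z'))
\]
together with the bounds $0\le \varepsilon(Z,Z')\le p_g(A)$ recalled just above the proposition, and the cohomological inequality of \proref{p:le-cycle}. The plan is to establish (1) by a squeeze argument and then deduce (2) formally from (1).

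For (1), substituting $h^1(\cO_X(-Z))=p_g(A)$ into the identity gives $\varepsilon(Z,Z')=h^1(\cO_X(-Z-Z'))-h^1(\cO_X(-Z'))$. Since $\cO_X(-Z)$ is generated it has no fixed components, so \proref{p:le-cycle} yields $h^1(\cO_X(-Z-Z'))\le h^1(\cO_X(-Z'))$, i.e.\ $\varepsilon(Z,Z')\le 0$. Combined with $\varepsilon(Z,Z')\ge 0$ this forces both $\varepsilon(Z,Z')=0$ and $h^1(\cO_X(-Z-Z'))=h^1(\cO_X(-Z'))$. The vanishing of $\varepsilon(Z,Z')$ is by definition the equality $I_{Z+Z'}=aI_{Z'}+bI_Z$ for general $a\in I_Z$, $b\in I_{Z'}$. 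Since $a\in I_Z$ and $b\in I_{Z'}$, we have $aI_{Z'}+bI_Z\subseteq I_ZI_{Z'}\subseteq I_{Z+Z'}$, so all three ideals coincide; in particular $I_ZI_{Z'}=I_{Z+Z'}=H^0(X,\cO_X(-Z-Z'))$ is integrally closed.

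For (2), the direction $(\Rightarrow)$ follows immediately from (1): if $Z$ and $Z'$ are $p_g$-cycles, then $h^1(\cO_X(-Z-Z'))=h^1(\cO_X(-Z'))=p_g(A)$, and the final clause of \proref{p:le-cycle} guarantees that $\cO_X(-Z-Z')$ is generated, so $Z+Z'$ is a $p_g$-cycle. Conversely, assume $Z+Z'$ is a $p_g$-cycle. Swapping the roles in \proref{p:le-cycle} (using that $\cO_X(-Z')$ has no fixed components) gives $p_g(A)=h^1(\cO_X(-Z-Z'))\le h^1(\cO_X(-Z))\le p_g(A)$, so $h^1(\cO_X(-Z))=p_g(A)$; combined with the standing hypothesis that $\cO_X(-Z)$ is generated, $Z$ is a $p_g$-cycle, and the symmetric argument handles $Z'$. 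The only subtlety to keep in view is that the hypothesis ``generated'' on $\cO_X(-Z)$ is precisely what feeds \proref{p:le-cycle}; once this is observed, both parts reduce to formal manipulation of the $\varepsilon$-identity.
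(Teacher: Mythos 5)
Your proof is correct and follows exactly the route the paper intends: the ``if'' part of (2) is deduced from \proref{p:le-cycle} just as in the paper's one-line proof, and the remaining claims, which the paper simply cites from \cite[3.5]{OWYgood}, you derive by the squeeze argument $0\le\varepsilon(Z,Z')\le 0$ using the $\varepsilon$-identity quoted just before the proposition. Nothing is missing; you have merely written out the details that the paper delegates to the reference.
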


\begin{proof}
The ``if part'' of (2) follows from \proref{p:le-cycle}.
The other claims follow from 
\cite[3.5]{OWYgood}.
\end{proof}

Recall that an ideal $J \subset I$ is called a {\it reduction} of $I$ if $I$ is integral over $J$ or, 
equivalently,  $I^{r+1} = I^rJ$ for some $r$ (e.g., \cite{SH}). 
An ideal  $Q\subset I$ is called a 
{\it minimal reduction} of $I$ if $Q$ is minimal among the reductions of $I$. 
Any minimal reduction of an $\m$-primary ideal is a parameter ideal.

\begin{cor}[\textrm{see \cite[3.6]{OWYgood}}]\label{c:sg}
Let  $I$ and $I'$  be any integrally closed $\m$-primary ideals.
\begin{enumerate}
\item $I$ and $I'$  are $p_g$-ideals if and only if so is $II'$. 
\item Assume that $I$ is a $p_g$-ideal.
Then $I^n$ is a $p_g$-ideal for every $n>0$. 
If  $Q$ is a minimal reduction of $I$,  then $I^2=QI$ and $I\subset Q: I$.
\end{enumerate}

\end{cor}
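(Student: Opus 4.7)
The plan is to deduce both parts directly from \proref{p:sg}. Fix a resolution $X$ on which $I=I_Z$ and $I'=I_{Z'}$ are simultaneously represented, so that $\cO_X(-Z)$ and $\cO_X(-Z')$ are generated by global sections and $II'\cdot\cO_X=\cO_X(-Z-Z')$.

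For the forward direction of (1), suppose $I$ and $I'$ are $p_g$-ideals, i.e., $Z$ and $Z'$ are $p_g$-cycles. By \proref{p:sg}(1) applied to the pair $(Z,Z')$, the product $I_ZI_{Z'}$ is integrally closed, so $II'=I_{Z+Z'}$. By \proref{p:sg}(2), $Z+Z'$ is a $p_g$-cycle, hence $II'$ is a $p_g$-ideal. For the converse, suppose $II'$ is a $p_g$-ideal. Then $II'$ is integrally closed and equals $I_{Z+Z'}$ with $h^1(\cO_X(-Z-Z'))=p_g(A)$. Since $\cO_X(-Z)$ has no fixed components, \proref{p:le-cycle} applied to $(Z',Z)$ gives
\[
p_g(A)=h^1(\cO_X(-Z-Z'))\le h^1(\cO_X(-Z')) \le p_g(A),
\]
forcing $h^1(\cO_X(-Z'))=p_g(A)$, so $Z'$ is a $p_g$-cycle; symmetrically for $Z$. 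Thus $I$ and $I'$ are $p_g$-ideals.

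For (2), the claim that $I^n$ is a $p_g$-ideal for every $n>0$ follows from part (1) by induction on $n$. To prove $I^2=QI$, choose general elements $a,b\in I$; the ideal $Q_0=(a,b)$ is a minimal reduction of $I$, and \proref{p:sg}(1) applied to the pair $(Z,Z)$ yields
\[
I_{2Z}=aI_Z+bI_Z=Q_0I.
\]
Moreover, by what we have just shown, $I\cdot I=I_ZI_Z=I_{2Z}$, so $I^2=Q_0I$. For an arbitrary minimal reduction $Q$ of $I$, the standard fact that $\ell_A(I^{n+1}/QI^n)$ is independent of the choice of minimal reduction $Q$ (valid because $A$ is two-dimensional and Cohen--Macaulay, so any minimal reduction is a parameter ideal of $I$) forces $\ell_A(I^2/QI)=0$, and hence $I^2=QI$. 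Finally, the inclusion $I\subset Q\colon I$ is equivalent to $I^2\subset Q$, and this is immediate from $I^2=QI\subset Q$.

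The only mildly delicate step is passing from "$I^2=Q_0I$ for our particular $Q_0$" to "$I^2=QI$ for \emph{every} minimal reduction $Q$"; this is handled by invoking the standard invariance of the tail of the Hilbert--Samuel function under change of minimal reduction in Cohen--Macaulay local rings. Everything else is a direct unwinding of \proref{p:le-cycle} and \proref{p:sg}.
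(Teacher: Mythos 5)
Your proof is correct and follows the route the paper intends: the corollary is just the ideal-theoretic restatement of \proref{p:le-cycle} and \proref{p:sg} (the paper itself only cites \cite[3.6]{OWYgood} rather than writing out an argument), and your derivation of both parts from those propositions is exactly the expected one. The one step you add beyond the cited propositions---passing from the particular minimal reduction $Q_0=(a,b)$ with $a,b$ general to an arbitrary minimal reduction $Q$---is sound, since every minimal reduction here is a parameter ideal in a two-dimensional Cohen--Macaulay ring, so $\ell_A(Q/QI)=2\,\ell_A(A/I)$ by the Koszul complex and hence $\ell_A(I^2/QI)=e(I)+2\,\ell_A(A/I)-\ell_A(A/I^2)$ is independent of $Q$.
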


Assume that $p_g(A)> 0$. 
Then we give a characterization of $p_g$-ideals by cohomological cycle.  
Let $K_X$ denote the canonical divisor on $X$.
Let $Z_{K_X}$ denote the canonical cycle, 
i.e., the $\Q$-divisor  supported in $E$ 
such that $K_X+Z_{K_X}\equiv 0$.
By \cite[\S 4.8]{chap}, 
there exists the smallest cycle $C_X>0$ on $X$ such that $h^1(\cO_{C_X}) =p_g(A)$, and $C_X=Z_{K_X}$ if $A$ is Gorenstein and the resolution $f \colon X \to \Spec A$ is minimal.
The cycle $C_X$ is called the \textit{cohomological cycle} on $X$.

\begin{prop}\label{p:CC}
Assume that $p_g(A)>0$ and 
let $C\ge 0$ be the minimal cycle such that 
$H^0(X\setminus E, \cO_X(K_X))=H^0(X,\cO_X(K_X+C))$.
Then $C$ is the cohomological cycle.
Therefore 
if $g\:X'\to X$ is the blowing-up at a point in $\supp C_X$ and $E_0$ the exceptional set for $g$, then $C_{X'}=g^*C_X-E_0$.
For any cycle $D>0$ without common components with $C_X$, we have $h^1(\cO_D)=0$. 
\end{prop}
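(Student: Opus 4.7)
The plan is to prove the characterization $C=C_X$ first, then derive the blow-up formula and the vanishing statement from it.

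For the first assertion, use for any effective cycle $D>0$ on $X$ the short exact sequence
\[
0 \to \cO_X(K_X) \to \cO_X(K_X + D) \to \omega_D \to 0,
\]
where $\omega_D = \cO_X(K_X+D)|_D$ is the dualizing sheaf of $D$ by adjunction. Grauert--Riemenschneider vanishing gives $H^1(X,\cO_X(K_X))=0$, so
\[
h^0(\omega_D) = h^0(\cO_X(K_X+D)) - h^0(\cO_X(K_X)).
\]
Serre duality on the $1$-dimensional proper scheme $D$ yields $h^0(\omega_D)=h^1(\cO_D)$. Combined with the identification
\[
\dim H^0(X\setminus E,\cO_X(K_X))/H^0(X,\cO_X(K_X)) = p_g(A)
\]
coming from the local cohomology sequence for $\cO_X(K_X)$, Grauert--Riemenschneider, and the duality $H^1_E(\cO_X(K_X))\cong H^1(X,\cO_X)^\vee$, this shows $h^1(\cO_D)\le p_g(A)$ with equality precisely when $H^0(X,\cO_X(K_X+D))=H^0(X\setminus E,\cO_X(K_X))$. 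Minimality in the definitions of $C_X$ and of $C$ then forces $C=C_X$.

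For the blow-up formula set $C' := g^*C_X-E_0$. Since $p\in\supp C_X$, $C'$ is effective, and from $K_{X'}=g^*K_X+E_0$ we get $K_{X'}+C'=g^*(K_X+C_X)$. The projection formula with $g_*\cO_{X'}=\cO_X$ gives
\[
H^0(X',\cO_{X'}(K_{X'}+C')) = H^0(X,\cO_X(K_X+C_X)) = H^0(X'\setminus E',\cO_{X'}(K_{X'})),
\]
so $C_{X'}\le C'$ by the first part applied on $X'$. For the reverse direction, write any effective $C''$ satisfying the analogous equality as $C''=\sum c''_i\tilde E_i + e\,E_0$ and compute
\[
K_{X'}+C'' = g^*(K_X+D) + m\,E_0, \qquad D:=\sum c''_i E_i, \quad m:=1+e-\sum_{p\in E_j} c''_j.
\]
Using $g_*\cO_{X'}(mE_0)=\cO_X$ for $m\ge 0$ and $g_*\cO_{X'}(mE_0)=\mathfrak m_p^{-m}$ for $m<0$, the $H^0$-equality on $X'$ forces $\omega_A\subseteq H^0(X,\cO_X(K_X+D))$, hence $D\ge C_X$ by the first part on $X$; the pushforward identity together with the existence (from the minimality of $C_X$ and $p\in\supp C_X$) of a section of $\omega_A$ whose image in $\cO_X(K_X+C_X)$ does not vanish at $p$ then rules out $m<0$, yielding $e\ge \sum_{p\in E_j} c_j-1$ and $C''\ge C'$.

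For the vanishing, let $D>0$ be effective with no common components with $C_X$. Any $s\in H^0(X,\cO_X(K_X+D))$ is regular on $X\setminus E$, so by the first part it lies in $H^0(X\setminus E,\cO_X(K_X)) = H^0(X,\cO_X(K_X+C_X))$. Thus its polar divisor as a rational section of $\cO_X(K_X)$ is bounded by both $D$ and $C_X$; having disjoint supports, it is $0$, so $s\in H^0(X,\cO_X(K_X))$. Hence $H^0(X,\cO_X(K_X+D))=H^0(X,\cO_X(K_X))$, and the formula from the first part gives $h^1(\cO_D)=0$. The most delicate point is the $m<0$ step in the blow-up argument, where the minimality of $C_X$ must be leveraged to produce a global section of $\omega_A$ that does not vanish at $p$ as a section of $\cO_X(K_X+C_X)$.
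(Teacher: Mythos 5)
Your first and third assertions are handled correctly. The identification $C=C_X$ (Grauert--Riemenschneider, adjunction and duality on $D$ giving $h^1(\cO_D)=h^0(\omega_D)=\ell_A\bigl(H^0(\cO_X(K_X+D))/H^0(\cO_X(K_X))\bigr)$, and the local cohomology computation of $p_g(A)$) is essentially the paper's own argument. For the vanishing $h^1(\cO_D)=0$ when $D$ and $C_X$ share no component, your route is genuinely different: you observe that $H^0(\cO_X(K_X+D))\subset H^0(X\setminus E,\cO_X(K_X))=H^0(\cO_X(K_X+C_X))$, so the polar divisor of any such section is bounded by both $D$ and $C_X$ and hence is zero, giving $H^0(\cO_X(K_X+D))=H^0(\cO_X(K_X))$. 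This is correct and more self-contained than the paper's argument, which instead invokes the surjection $H^1(\cO_{C+D})\to H^1(\cO_C)\oplus H^1(\cO_D)$ from \cite[\S 4.8]{chap}.

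The gap is in the blow-up formula, exactly at the point you flag as delicate. The inequality $C_{X'}\le g^*C_X-E_0$ is fine, but to rule out $m<0$ you need a section $s\in\omega_A$ such that the effective divisor $\di_X(s)+C_X$ avoids the centre $p$, i.e.\ that $\cO_X(K_X+C_X)$ is generated at $p$ by $H^0(X\setminus E,\cO_X(K_X))$. Minimality of $C_X$ does not give this: it only guarantees, for each component $E_i\le C_X$, a section with $\ord_{E_i}(\di_X(s)+C_X)=0$, which is non-vanishing at the \emph{generic} point of $E_i$ and says nothing about a prescribed closed point $p$. The required generation can in fact fail: if $C_X=E_1+E_2$ with $E_1,E_2$ elliptic curves meeting transversally at $p$ (for suitable self-intersection numbers one gets $p_g(A)=2$ and $C_X=E_1+E_2$), then every global section of $\omega_{C_X}$ restricts on each branch to a holomorphic differential with zero residue at the node, hence lies in $\m_p\,\omega_{C_X}$; since $H^0(\cO_X(K_X+C_X))$ surjects onto $H^0(\omega_{C_X})$, no section of $\cO_X(K_X+C_X)$ generates it at $p$, and your argument cannot exclude $m<0$ there. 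The paper itself offers no proof of this part of the proposition---it is presented as an immediate consequence of the first assertion---so this step genuinely needs an argument, and the one you propose does not close it.
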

\begin{proof}
By the Grauert-Riemenschneider vanishing theorem\footnote{Note that 
the 
Grauert-Riemenschneider vanishing theorem holds 
in any characteristic  in dimension $2$ (\cite{Gi}).}  and  
the duality theorem, we have
\begin{align*}
\ell_A(H^0(X\setminus E, \cO_X(K_X))/H^0(X,\cO_X(K_X)))
&=\ell_A(H_E^1(X, \cO_X(K_X)))=p_g(A), \\
\ell_A(H^0(X, \cO_X(K_X+C))/H^0(X,\cO_X(K_X)))
&=h^0(\cO_C(K_X+C)))=h^1(\cO_C).
\end{align*}
Thus $h^1(\cO_C)=p_g(A)$.\footnote{This result was already obtained by Tomari \cite{tomari.tagajo}.}
Let $C'>0$ be a cycle such that $C'<C$. 
By the assumption, we have
$$
h^1(\cO_{C'})=\ell_A(H^0(\cO_X(K_X+C'))/H^0(\cO_X(K_X)))
<\ell_A(H_E^1(\cO_X(K_X)))=p_g(A).
$$

Let $D>0$ be a cycle without common components with $C$.
As in the proof of \cite[\S 4.8]{chap} (putting $A=0$),
we obtain the surjection 
$H^1(\cO_{C+D})\to H^1(\cO_{C})\oplus H^1(\cO_{D})$.
Since $h^1(\cO_{C+D})=h^1(\cO_C)=p_g(A)$, it follows that $H^1(\cO_D)=0$.
\end{proof}

We have the following characterization of $p_g$-ideals  
in terms of cohomological cycle.

\begin{prop} [{\cite[3.10]{OWYgood}}] \label{p:CX} 
Assume that $p_g(A)>0$.
Let $Z>0$ be a cycle such that $\cO_X(-Z)$ has no fixed component.
Then $Z$ is a $p_g$-cycle if and only if
$\cO_{C_X}(-Z)\cong \cO_{C_X}$.
\end{prop}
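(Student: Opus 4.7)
The strategy is to work with the short exact sequence
$$0 \to \mathcal{O}_X(-Z-C_X) \to \mathcal{O}_X(-Z) \to \mathcal{O}_{C_X}(-Z) \to 0,$$
obtained by tensoring the structure sequence of $C_X$ with $\mathcal{O}_X(-Z)$, whose cohomology long exact sequence contains the segment
$$H^1(\mathcal{O}_X(-Z-C_X)) \xrightarrow{\phi} H^1(\mathcal{O}_X(-Z)) \to H^1(\mathcal{O}_{C_X}(-Z)) \to 0.$$

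For the direction ($\Leftarrow$), assume $\mathcal{O}_{C_X}(-Z) \cong \mathcal{O}_{C_X}$. Then \proref{p:CC} gives $h^1(\mathcal{O}_{C_X}(-Z)) = h^1(\mathcal{O}_{C_X}) = p_g(A)$, and the surjection above forces $h^1(\mathcal{O}_X(-Z)) \geq p_g(A)$. Combined with the upper bound $h^1(\mathcal{O}_X(-Z)) \leq p_g(A)$ from \proref{p:le-cycle} (applied with $Z' = 0$, using that $\mathcal{O}_X(-Z)$ has no fixed component), equality is forced and $Z$ is a $p_g$-cycle.

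For the direction ($\Rightarrow$), assume $Z$ is a $p_g$-cycle. By the last clause of \proref{p:le-cycle}, $\mathcal{O}_X(-Z)$ is generated by global sections, so in particular $\mathcal{O}_{C_X}(-Z)$ is globally generated on $C_X$. The goal is to produce a nowhere-vanishing section of $\mathcal{O}_{C_X}(-Z)$, which would then trivialize this line bundle. The natural candidate is the restriction $\bar{s} \in H^0(\mathcal{O}_{C_X}(-Z))$ of a general global section $s \in H^0(\mathcal{O}_X(-Z))$: writing the divisor of $s$ on $X$ as $Z + D$ with $D$ effective, the "no fixed component" hypothesis ensures that $D$ shares no exceptional component for general $s$, and the zero scheme of $\bar{s}$ on $C_X$ is precisely $D \cap C_X$. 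It therefore suffices to establish $D \cap C_X = \emptyset$ for generic $s$.

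The $p_g$-cycle assumption enters through the exact sequence
$$0 \to \mathcal{O}_X \xrightarrow{\cdot s} \mathcal{O}_X(-Z) \to \mathcal{O}_D \to 0$$
together with a length calculation on the induced long exact cohomology sequence, exploiting $h^1(\mathcal{O}_X) = h^1(\mathcal{O}_X(-Z)) = p_g(A)$ to control the size of $H^1(\mathcal{O}_D)$ and, through it, the contribution of $D$ near $\supp C_X$. The characterization of $C_X$ as the smallest cycle with $h^1(\mathcal{O}_{C_X}) = p_g(A)$ in \proref{p:CC} should then force the extra divisor $D$ to avoid $\supp C_X$. The main obstacle I anticipate is converting the cohomological statement into the geometric conclusion $D \cap C_X = \emptyset$: since $D$ is a non-exceptional divisor while \proref{p:CC} is phrased for effective exceptional cycles, one likely needs a local analysis of $\mathcal{O}_D$ at the transverse intersections with $E$ to transfer the conclusion, or, alternatively, one bypasses the generic section entirely by first showing $\phi = 0$ (hence $h^1(\mathcal{O}_{C_X}(-Z)) = p_g(A)$) and then proving that a globally generated line bundle on $C_X$ attaining this maximal $h^1$ must be trivial.
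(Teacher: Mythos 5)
Note first that the paper does not prove \proref{p:CX} at all---it is quoted from \cite[3.10]{OWYgood}---so there is no in-paper proof to compare against; your attempt has to stand on its own. Your ($\Leftarrow$) direction does: the surjection $H^1(\cO_X(-Z))\to H^1(\cO_{C_X}(-Z))$ (valid because $H^2(\cO_X(-Z-C_X))=0$ on a resolution with one-dimensional fibres), together with $h^1(\cO_{C_X}(-Z))=h^1(\cO_{C_X})=p_g(A)$ and the bound of \proref{p:le-cycle}, forces $h^1(\cO_X(-Z))=p_g(A)$, and the last clause of \proref{p:le-cycle} supplies global generation. That half is complete and correct.

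The ($\Rightarrow$) direction, however, is not a proof but a statement of the difficulty. You correctly reduce the claim to showing that the non-exceptional part $D$ of the divisor of a general section $s\in H^0(\cO_X(-Z))$ satisfies $D\cap\supp C_X=\emptyset$, but the step that is supposed to prove this---``a length calculation \dots to control the size of $H^1(\cO_D)$''---cannot work as described: $D$ is finite over the affine curve $\Spec A/(s)$, hence affine, so $H^1(\cO_D)=0$ unconditionally and detects nothing. The actual content sits in the connecting map $H^0(\cO_D(-Z))\to H^1(\cO_X)$ of the sequence $0\to\cO_X\xrightarrow{\,s\,}\cO_X(-Z)\to\cO_D(-Z)\to 0$: the hypothesis $h^1(\cO_X(-Z))=p_g(A)$ says exactly that this map vanishes, and one must then show that its vanishing forces $D$ to miss $\supp C_X$; this is where the characterization of $C_X$ via $H^0(X\setminus E,\cO_X(K_X))=H^0(X,\cO_X(K_X+C_X))$ in \proref{p:CC} (i.e.\ a duality argument) has to enter, and it is absent. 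Your fallback route has the same status: the claim $\phi=0$ is unjustified, and ``a globally generated line bundle on $C_X$ with $h^1=p_g(A)$ is trivial'' is itself a nontrivial assertion---dualizing a generating section $t$ only gives $H^0(\omega_{C_X}\otimes\cL^{-1})\subset H^0(\omega_{C_X})$ with equality iff every section of $\omega_{C_X}$ vanishes on the zero scheme of $t$, and ruling that out again needs the minimality of $C_X$. So the harder implication, which is the one the paper actually uses (e.g.\ in \thmref{t:pgf}), is left unproved.
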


Now we will define the core of an ideal $I$ of $A$. 
 
\begin{defn}\label{core-dfn} Let $I$ be an $\m$-primary ideal of $A$.
\begin{enumerate}
\item  The {\em core} of $I$ is defined by 
\[
\core(I)=\bigcap_{\text{$Q$ is a reduction of $I$}} Q.
\]
\item  $I$ is said to be {\it stable} if $I^2=QI$ for any minimal reduction $Q$ of $I$.
By this definition, if $I$ is stable, then $I\subset Q:I$ and $I^2 \subset \core(I)$. 
\item $I$ is said to be {\it good} if  $I$ is stable and $Q:I=I$ for any minimal reduction $Q$ of $I$ (\cite{GdId}, \cite{OWYgood}).  If $I$ is integrally closed and stable, then $I^2 = \core(I)$ if and only if $I$ is a good ideal 
 (see \lemref{l:core}). 
\end{enumerate}
\end{defn}

\begin{rem}[{\cite{GdId}}]\label{r:Ggood}
 If $A$ is Gorenstein, then $I$ is good if and only if $I$ is stable and  $e(I) = 2\ell_A(A/I)$, where $e(I)$ denotes the multiplicity of $I$ (if $I=I_Z$, 
then $e(I_Z) = -Z^2$). 
We have shown the existence of good $p_g$-ideals when $A$ is Gorenstein in \cite{OWYgood}.
\end{rem}

We also have a characterization of $p_g$-ideal in terms of the Rees algebras.

\begin{prop}[{\cite{OWYrees}}]\label{p:pgC}
Let $I$ be an integrally closed $\m$-primary ideal $I$.
Then the following conditions are equivalent.
\begin{enumerate}
 \item $I$ is a $p_g$-ideal.
 \item The Rees algebra $\bigoplus_{n\ge 0}I^nt^n\subset A[t]$ is a Cohen-Macaulay normal domain. 
\item $I^2=QI$ and $\overline{I^n}=I^n$ for every $n>0$, where $\overline{J}$ denotes the integral closure of an ideal $J$.
 \end{enumerate}  
\end{prop}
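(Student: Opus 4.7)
The plan is to establish the cycle of implications $(1)\Rightarrow(3)\Rightarrow(2)\Rightarrow(1)$. The first, $(1)\Rightarrow(3)$, is essentially already proved: by \corref{c:sg}(2), every power $I^n$ of a $p_g$-ideal is again a $p_g$-ideal, hence integrally closed, and $I^2 = QI$ for every minimal reduction $Q$.

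For $(3)\Rightarrow(2)$ I would argue as follows. Normality of $R(I)=\bigoplus_{n\ge 0}I^nt^n$ is equivalent to the condition $\overline{I^n}=I^n$ for all $n$, since $A$ is a normal domain and $R(I)$ is a graded subring of the normal ring $A[t]$. The relation $I^2=QI$ inductively gives $I^{n+1}=QI^n$ for all $n\ge 1$, so the reduction number is at most one. In a two-dimensional Cohen--Macaulay local ring the Valabrega--Valla criterion then yields that $G(I)$ is Cohen--Macaulay with $a$-invariant $-1$, and the Goto--Shimoda theorem lifts this to Cohen--Macaulayness of $R(I)$ (which has dimension $3$).

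The essential content of the proposition is $(2)\Rightarrow(1)$, which I would handle geometrically through the normalized blow-up. Let $Y=\Proj R(I)$, with $\OO_Y(1)=\OO_Y(-Z_Y)$ the tautological ample invertible sheaf, and let $f\: X\to Y$ be the induced birational morphism from the fixed resolution, so that $Z=f^{\ast}Z_Y$. The graded local cohomology identification $H^{2}_{R(I)_+}(R(I))_n\cong H^1(Y,\OO_Y(n))$ together with Cohen--Macaulayness of $R(I)$ gives $H^1(Y,\OO_Y(n))=0$ for every $n\in\Z$. Since $Y$ is a normal surface, $R^1f_{\ast}\OO_X$ is supported on the zero-dimensional $\sing(Y)$; the projection formula therefore gives
\[
R^1f_{\ast}\OO_X(-Z)\;\cong\;R^1f_{\ast}\OO_X\otimes\OO_Y(-Z_Y)\;\cong\;R^1f_{\ast}\OO_X,
\]
while $f_{\ast}\OO_X(-Z)=\OO_Y(-Z_Y)$ and $H^2(Y,-)$ vanishes on coherent sheaves because the fibers of $Y\to\Spec A$ have dimension at most one. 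Feeding all of this into the Leray five-term sequences for $\OO_X$ and $\OO_X(-Z)$ yields
\[
p_g(A)=h^1(\OO_X)=h^0(Y,R^1f_{\ast}\OO_X)=h^1(\OO_X(-Z)),
\]
so $Z$ is a $p_g$-cycle and $I=I_Z$ is a $p_g$-ideal.

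The main obstacle is this last direction. One has to push an algebraic Cohen--Macaulay hypothesis on $R(I)$ through a possibly singular intermediate $Y$ to a cohomological statement on the regular resolution $X$. The two delicate ingredients are (i) upgrading the automatic Serre vanishing for $n\gg 0$ to $H^1(Y,\OO_Y(n))=0$ for \emph{every} $n\in\Z$, which is exactly what the CM hypothesis buys via graded local cohomology, and (ii) the projection-formula identification $R^1f_{\ast}\OO_X(-Z)\cong R^1f_{\ast}\OO_X$, which is what pins the answer to $p_g(A)$ exactly rather than to some strict subquotient.
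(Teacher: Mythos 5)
First, a remark on the comparison you were asked to make: the paper does not prove Proposition~\ref{p:pgC} at all --- it is imported verbatim from \cite{OWYrees} --- so there is no in-paper argument to measure yours against. Judged on its own terms, your overall architecture is reasonable and close in spirit to what the cited paper does: $(1)\Rightarrow(3)$ via Corollary~\ref{c:sg} is fine, and $(3)\Rightarrow(2)$ via the identification $\overline{R(I)}=\bigoplus\overline{I^n}t^n$ inside $A[t]$ together with Valabrega--Valla and Goto--Shimoda is a correct standard argument (note $A$ is CM with infinite residue field, so these apply).

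The gap is in $(2)\Rightarrow(1)$, precisely at the step you yourself single out as delicate. The assertion that Cohen--Macaulayness of $R(I)$ gives $H^1(Y,\OO_Y(n))=0$ for \emph{every} $n\in\Z$ is false for $n<0$: take $A$ an $A_1$-singularity and $I=\m$, so that $Y=X$ is the minimal resolution and $\OO_Y(-1)=\OO_X(E)$ with $E^2=-2$; then $R(\m)$ is a Cohen--Macaulay normal domain, yet $h^1(\OO_X(E))=p_g(A)+h^1(\OO_E(E))=1$ by Lemma~\ref{l:DD}(3). The source of the error is that Cohen--Macaulayness controls $H^i_{\mathfrak M}(R(I))$ for the homogeneous maximal ideal $\mathfrak M$, not $H^i_{R(I)_+}(R(I))$; to pass from one to the other you need the spectral sequence $H^p_{\m}(H^q_{R(I)_+}(R(I)))\Rightarrow H^{p+q}_{\mathfrak M}(R(I))$, and since $H^1_{R(I)_+}(R(I))_n\cong H^0(Y,\OO_Y(n))/I^n$ equals the infinite-length module $A$ in negative degrees, the argument only yields $H^2_{R(I)_+}(R(I))_n=H^1(Y,\OO_Y(n))=0$ for $n\ge 0$ (where the relevant $E_2^{2,1}$-terms vanish because $\overline{I^n}/I^n$ has finite length). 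Fortunately your Leray computation only uses $n=0$ and $n=1$, and those pieces --- $f_*\OO_X=\OO_Y$ from normality of $Y$, the projection-formula identification $R^1f_*\OO_X(-Z)\cong R^1f_*\OO_X$ on the finite support $\sing(Y)$, and $H^2(Y,-)=0$ --- are all correct, so the proof is repairable: restrict the vanishing claim to $n\ge 0$ and supply the $\mathfrak M$-versus-$R(I)_+$ comparison. As written, however, the central cohomological input is both overstated and not actually derived from the hypothesis. (One should also say explicitly that $X$ is chosen to dominate $Y=\Proj R(I)$, which is harmless since the $p_g$-property is independent of the representing resolution.)
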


In \sref{s:comp}, we will give the description of $\core(I)$ for a $p_g$-ideal $I$.

\section{A question on core of integrally closed ideals}\label{s:quest}

The core of an ideal $I$ is the intersection of all the 
reductions of $I$, or, equivalent to say, the intersection of all the minimal reductions of $I$.
We are interested whether taking the core preserves the inclusion of 
ideals.
But it is obvious that if ideals $I'\subset I$ have the same integral closure, then 
$\core(I') \supset \core(I)$, since a reduction of $I'$ is also a reduction of $I$.
So, we ask the following question.

\begin{quest}\label{Q1} Let $(A,\frm)$ be a two-dimensional excellent normal local domain and let 
$I'\subset I$ be  integrally closed $\m$-primary ideals. Then is it always true that 
$\core(I')\subset \core(I)$?
\end{quest}

 \queref{Q1} was asked in \cite{HS-core} and then in \cite[Question 5.5.2]{Hyry-Smith}.
  When $A$ is two-dimensional Gorenstein rational singularity this question was answered affirmatively (see \cite[5.5.1]{Hyry-Smith}). 
On the other hand, Kyungyong Lee \cite{Lee-core} gave a counterexample to this question in a regular local ring of dimension 4. 

The answer to this question and the main theorem of this paper is the following.

\begin{thm}\label{mainThm} 
Let $(A,\frm)$ be a two-dimensional excellent normal local domain. Then the following conditions are equivalent 
if $\chara(k)\ne 2$. 
\begin{enumerate} 
\item For any integrally closed $\frm$-primary ideals $I'\subset I$, we have $\core(I')\subset \core(I)$.
\item $A$ is a rational singularity.
\end{enumerate}
\end{thm}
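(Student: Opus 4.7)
The plan is to handle the two implications separately, since both rely on structural results that the paper has already announced.

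For the direction (2) $\Rightarrow$ (1), I would simply invoke Theorem~\ref{t:subset}, which asserts that $\core$ preserves inclusions between $p_g$-ideals. When $A$ is a rational singularity, $p_g(A) = 0$, and the remark following Definition~\ref{p_g-dfn} tells us every integrally closed $\m$-primary ideal is automatically a $p_g$-ideal. So any inclusion $I' \subset I$ of integrally closed $\m$-primary ideals is an inclusion of $p_g$-ideals, and Theorem~\ref{t:subset} delivers $\core(I') \subset \core(I)$. This is essentially a one-line argument once the machinery is in place.

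For the direction (1) $\Rightarrow$ (2), I would prove the contrapositive: if $A$ is not rational, so $p_g(A) > 0$, then I must exhibit integrally closed $\m$-primary ideals $I' \subset I$ with $\core(I') \not\subset \core(I)$. Theorem~\ref{Main2} guarantees the existence of a (good) $p_g$-ideal, giving a concrete candidate for $I'$, and Theorem~\ref{algorithm} provides the explicit formula $\core(I') = I_{2Z-Y}$ on a suitable resolution. The goal is then to construct an integrally closed non-$p_g$-ideal $I \supset I'$, represented by a cycle $Z_I$ with $h^1(\cO_X(-Z_I)) < p_g(A)$, and to locate a specific element $x \in \core(I')$ together with a minimal reduction $Q$ of $I$ such that $x \notin Q$, so that $x \notin \core(I)$. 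This is exactly what is needed to invoke Proposition~\ref{C-E}.

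The hard part will be the contrapositive direction, and in particular the construction of the overideal $I$ together with a witnessing minimal reduction. The key geometric input will be the strict inequality $h^1(\cO_X(-Z_I)) < p_g(A)$ for a non-$p_g$-ideal, which via Proposition~\ref{p:CX} reflects the fact that $\cO_{C_X}(-Z_I)$ is \emph{not} trivial on the cohomological cycle $C_X$; this nontriviality on $C_X$ is what allows the minimal reductions of $I$ to be more restrictive than those of the $p_g$-ideal $I'$. I expect the technical crux of Proposition~\ref{C-E} is to translate this cohomological gap on $C_X$ into a concrete element missing from some minimal reduction of $I$, so that the explicit formula for $\core(I')$ from Theorem~\ref{algorithm} can be compared against $\core(I)$.

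Once both implications are set up this way, the theorem follows by combining Theorems~\ref{t:subset}, \ref{algorithm}, \ref{Main2}, and Proposition~\ref{C-E}; no further delicate analysis beyond the counterexample construction is required.
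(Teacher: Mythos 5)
Your overall skeleton coincides with the paper's: the implication (2) $\Rightarrow$ (1) is exactly the paper's argument (in a rational singularity every integrally closed $\m$-primary ideal is a $p_g$-ideal, so \thmref{t:subset} applies), and the implication (1) $\Rightarrow$ (2) is, as in the paper, delegated to \proref{C-E}. However, your sketch of how the counterexample is produced reverses the order of construction in a way that would not work if carried out literally. You propose to fix a good $p_g$-ideal $I'$ in advance (via \thmref{Main2}), compute $\core(I')=I_{2Z-Y}$ by \thmref{algorithm}, and then hunt for a non-$p_g$-overideal $I\supset I'$ together with a witnessing element. The paper's \proref{C-E} goes the other way: one starts from an integrally closed non-$p_g$-ideal $I$ (which exists whenever $p_g(A)>0$, e.g.\ any $I_Z$ with $h^1(\cO_X(-Z))=0$), proves $I^2\not\subset Q$ for a minimal reduction $Q$ (a diagram chase resting on $H^0(\cO_Z(Z))=0$ from \lemref{l:DD}, after replacing $I$ by a suitable $\overline{I^n}$), takes a general $f\in I$ with $f^2\notin Q$, and only then manufactures a $p_g$-ideal $I'\subset I$ \emph{containing this particular} $f$ via \thmref{t:pgf}; stability of $I'$ then gives $f^2\in I'^2\subset\core(I')$ while $f^2\notin\core(I)$. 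The adaptation of $I'$ to the chosen element $f$ is the whole point: a good $p_g$-ideal produced beforehand by \thmref{Main2} need not contain any element whose square escapes a reduction of some prescribed overideal, and neither \thmref{Main2} nor the explicit core formula of \thmref{algorithm} is actually used in this direction --- only $I'^2\subset\core(I')$, i.e.\ stability from \corref{c:sg}. If you cite \proref{C-E} as a black box the theorem follows, but the plan you describe for realizing it would stall at the construction of $I$ and of the witnessing element.
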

\begin{proof} If $A$ is a rational singularity then every integrally closed ideal is a $p_g$-ideal.
We will show in \thmref{t:subset}  that the answer to Question \ref{Q1} is positive if both $I$ and $I'$ are 
$p_g$-ideals. Hence 
 the implication (2) $\Longrightarrow$ (1) is true.\par
 
The implication (1) $\Longrightarrow$ (2) follows from \proref{C-E}.
\end{proof}  

\begin{rem} The implication (2) $\Rightarrow$ (1) holds if $\chara(k)=2$, 
since \thmref{t:subset} does not depend on $\chara(k)$. 
\end{rem}

By applying some results in \sref{s:comp} and \sref{s:pgexs}, we shall prove 
the following.
  
 \begin{prop}\label{C-E}
Assume that $p_g(A)>0$ and  $\chr(k)\ne 2$. Let $I=I_Z$ be an integrally closed ideal
 such that $I^2\ne QI$ for some minimal reduction
$Q$ of $I$ and that $I^2$ is integrally closed.
Then there exists a $p_g$-ideal $I'\subset I$
such that $\core(I') \not\subset \core( I )$. \par
In particular, if $I=I_Z$ is an integrally closed ideal which is
{\em not} a $p_g$-ideal, then for some $n>0$,
there exists a $p_g$-ideal $I'\subset \overline{I^n}$
such that $\core(I') \not\subset \core( \overline{I^n} )$.
\end{prop}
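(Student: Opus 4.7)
For the main statement, fix a minimal reduction $Q = (a,b)$ of $I$ with $I^2 \ne QI$ and, using that $I^2$ is integrally closed, pick $f \in I^2 \setminus QI$. Since $\core(I) \subset Q$ by definition, it suffices to construct a $p_g$-ideal $I' \subset I$ together with an element $g \in \core(I')$ satisfying $g \notin Q$. The choice of $I'$ requires care: the naive candidate $I' = J_0^N$ for a fixed $p_g$-ideal $J_0$ and large $N$ is too small, because in our two-dimensional Cohen--Macaulay setting $\m^{e(I)} \subset R$ for every minimal reduction $R$ of $I$, so $I' = J_0^N \subset \m^N \subset \bigcap_R R = \core(I)$ once $N$ is large enough, giving $\core(I') \subset \core(I)$. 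Motivated by \proref{p:CX}, the correct construction should have the form $I' = I_{Z+W}$, where $W \ge 0$ is effective, $Z+W$ is a $p_g$-cycle (equivalently $\cO_{C_X}(-(Z+W)) \cong \cO_{C_X}$), and $W$ is chosen small enough that $I'$ still carries a generator outside $Q$; the existence of such $W$ reduces to representing the line bundle $\cO_{C_X}(Z)$ by an anti-effective divisor supported on $E$, possibly after further blowing up.

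The principal obstacle is the final step: combining the explicit description $\core(I') = I_{2(Z+W)-Y'}$ from \thmref{algorithm} (\sref{s:comp}) with the witness $f \in I^2 \setminus QI$ to exhibit an element $g \in I_{2(Z+W)-Y'}$ whose image in $A/Q$ is nonzero, and to verify that this $g$ actually lies in $\core(I')$. The hypothesis $I^2 \ne QI$ (rather than merely $I$ being non-$p_g$) is used essentially here, since the element $f$ records the failure of stability in the precise cycle-theoretic form needed to survive passage through the $p_g$-structure of $I'$.

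For the ``in particular'' clause, set $J_n := \overline{I^n}$. On a log resolution $X$ with $I\cO_X = \cO_X(-Z)$, $J_n = I_{nZ}$, and iterating \proref{p:le-cycle} yields $h^1(\cO_X(-nZ)) \le h^1(\cO_X(-Z)) < p_g(A)$, so no $J_n$ is a $p_g$-ideal. Since the normal Rees algebra $\bigoplus_m \overline{I^m}t^m$ is a finite extension of the Rees algebra $A[It]$, for $n \gg 0$ one has $\overline{I^{nm}} = \overline{I^n}^m$ for every $m \ge 1$, so every power of $J_n$ is integrally closed. \proref{p:pgC}(3) then forces $J_n$ to be non-stable ($J_n^2 \ne Q'J_n$ for some minimal reduction $Q'$), so $J_n$ satisfies the hypotheses of the main part, yielding the ``in particular'' conclusion.
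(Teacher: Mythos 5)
Your argument for the main statement has a genuine gap, which you yourself flag as ``the principal obstacle'': you never actually produce the $p_g$-ideal $I'$ together with an element of $\core(I')$ lying outside $\core(I)$. The paper closes this in two steps that are both missing from your proposal. First, it upgrades the hypothesis $I^2\ne QI$ to the much stronger statement $I^2\not\subset Q$: this is a cohomological argument comparing the Koszul sequences for $(a,b)$ twisted by $\cO_X(-2Z)$ and by $\cO_X(-Z)$, where the key input is $H^0(\cO_Z(Z))=0$ (\lemref{l:DD}), which makes the map $H^1(\cO_X)\to H^1(\cO_X(Z))$ injective and hence shows that the nonzero class of $I^2/QI$ survives in $I/Q$. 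Your element $f\in I^2\setminus QI$ is of no direct use, since nothing prevents it from lying in $Q$. Second, once $I^2\not\subset Q$, a general $f\in I$ satisfies $f^2\notin Q\supset\core(I)$, and \thmref{t:pgf} supplies a $p_g$-ideal $I'=\overline{(f,h)}\subset I$ containing $f$; stability of $I'$ then gives $f^2\in I'^2\subset\core(I')$, finishing the proof. Your proposed $I'=I_{Z+W}$ is shaped like the ideal produced by \thmref{t:pgf}, but without tying $I'$ to a specific witness element $f$ with $f^2\notin Q$ there is no way to exhibit the required non-containment; invoking the explicit formula $\core(I')=I_{2(Z+W)-Y'}$ does not by itself produce an element outside $Q$.

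Your treatment of the ``in particular'' clause is essentially correct and takes a slightly different route from the paper: you derive $\overline{I^n}^2\ne Q_n\overline{I^n}$ from \proref{p:pgC} (a stable ideal all of whose powers are integrally closed but which is not a $p_g$-ideal cannot exist), whereas the paper chooses $n$ so that $h^1(\cO_X(-nZ))$ is minimal and computes $\ell_A(\overline{I^n}^2/Q_n\overline{I^n})=p_g(A)-h^1(\cO_X(-nZ))>0$ directly from the Koszul sequence. Both work; but since this clause only reduces to the main statement, it does not repair the gap above.
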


\begin{proof} 
Suppose that a minimal reduction $Q$ of $I$ is generated by a system of parameters $a,b\in I$. 
Let us prove that $I^2\not\subset Q$.
If $I^2 \subset Q$ holds, then 
$I^2 = \overline{I^2} = \overline{I^2} \cap Q=Q\overline{I}=QI$ 
by Huneke--Itoh theorem (see \cite{Hu}, \cite{Itoh}). 
This contradicts the assumption. 
Hence $I^2 \not \subset Q$.  

%

\par
Now, since  $I^2\not\subset Q$  
 and since $\chara(k)\ne 2$, 
 for a general element $f$ of $I$, $f^2\not\in Q$ and 
in particular, $f^2\not\in\core(I)$. 
On the other hand, by \thmref{t:pgf}, there exists a $p_g$-ideal $I'\subset I$ such that $f\in I'$.
Then $f^2\in I'^2\subset \core(I')$,
 since $I'$ is stable.
This shows that $\core(I')\not\subset \core(I)$.

 For a given $I= I_Z$, which is not a $p_g$-ideal, choose $n$ so that 
$\overline{I^{nk}}=\overline{I^n}^k$ and
$h^1(\cO_X(-nZ))=h^1(\cO_X(-nkZ))$ 
for every positive integer $k$.
 Such $n$ exists since the normalized Rees algebra of $I$ is Noetherian and $h^1(\cO_X(-nZ))$ is a decreasing function of $n$ (cf. \proref{p:le-cycle}).  
Clearly $\overline{I^n}=I_{nZ}$ is not a $p_g$-ideal 
by \proref{p:sg} (2).  
Let $Q_n$ be a minimal reduction of $\overline{I^n}$. 
Since $h^1(\cO_X(-nZ))=h^1(\cO_X(-2nZ))$, from the exact sequence
\[
0 \to  \cO_X  \to  \cO_X(-nZ)^{\oplus 2}  \to \cO_X(-2nZ)  \to 0, 
\]
we obtain $\ell_A(\overline{I^n}^2/Q_n\overline{I^n})=p_g(A)-h^1(\cO_X(-nZ))>0$.
Therefore we can apply the first claim to the ideal $ \overline{I^n}$.
\end{proof}  

\begin{exam}\label{ex:244}  Let $A = k[[x,y,z]]/(x^2+ y^4+ z^4)$, where $k$ is an algebraically closed 
field.\footnote{If $\chr(k)=2$, then we change $ x^2+ y^4+ z^4$ to $x^2+ g(y,z)$, where $g(y,z)$ is a form of degree $4$ with no multiple roots.} 
Then $\core(\m) = \m^2$ since $\m$ is a good ideal (cf. \remref{r:Ggood}, \lemref{l:core}).  Note that $\core(\m)$ is {\it not} integrally closed since $x$ is integral over $\m^2$. It is easy to show that 
for every integrally closed ideal $I'\subset \m$, $\core(I')\subset \core(\m)=\m^2$.

Let $f_0 : X_0 \to \Spec A$ be the   minimal resolution.
Then 
$f_0^{-1}(\m) = E_0$ is an elliptic curve $\{x^2+ y^4+ z^4=0\}$ in the weighted projective 
space $\PP(2,1,1)$. 
We have $p_g(A)=1$, $\m=I_{E_0}$, $h^1(\cO_{X_0}(-nE_0))=0$ for $n>0$ by the vanishing theorem (cf. \cite{Gi}).
Now, let $I :=I_{2E_0}= \overline{\m^2}$, which is generated by $\m^2$ and $x$. Then $I^n$ is integrally closed (cf. \cite[4.24]{chap}) and not a $p_g$-ideal for every $n>0$.  
It is easy to see that $\ell_A(I^2/QI)=p_g(A)=1$ for any minimal reduction $Q$ of $I$.
\par 
Take a general element $g$ of $I$ and let $P_1,\ldots , P_4\in E_0$ be the intersection points of $E_0$ and $\di_X(g)-2E_0$.
Let $f_1: X \to X_0$ be the blowing-up of these $4$ points and let $E_i = f_1^{-1}(P_i)$, $i=1, \ldots ,4$.
We again denote by $E_0$ the strict transform of $E_0$ on $X$.
Then $Z = 2 E_0 + 3\sum_{i=1}^4 E_i$ is a $p_g$-cycle on $X$ and $K_XZ=0$. 
By Kato's Riemann-Roch formula (\cite{kato}, cf. \cite[2.8]{OWYgood}), we have $\ell_A( A/ I_Z)=6$ and $\ell_A( A/ \overline{\m^3})=7$. 
Thus $I_Z = (g) + \overline{\m^3}\subset I$. 
Since $2 \ell_A( A/ I_Z)=-Z^2$,
$I_Z$ is also a good ideal (cf. \remref{r:Ggood}) and $\core(I_Z) = I_Z^2$.  On the other hand, as we have seen in the proof of
Proposition \ref{C-E}, $g^2\not\in \core(I)$. Thus we conclude that  
$\core(I_Z)\not\subset \core(I)$.
\end{exam}

\section{Computation of $\core(I)$ and $Q:I$ of a $p_g$-ideal $I$}\label{s:comp}

Let $I=I_Z$ be an integrally closed ideal of $A$ represented on some resolution $X$, and let $Q$ be any minimal reduction of $I$. 
We begin by recalling a description of core when $I$ is stable.

\begin{lem}\label{l:core} 
Assume that $I^2=QI$. Then 
\begin{equation}\label{eq:core}
\core(I)=Q^2:I=(Q:I)I=(Q:I)Q.
\end{equation}
Moreover, if $\core(I)$ is integrally closed, then so is $Q:I$.
\end{lem}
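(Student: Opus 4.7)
The plan is to address the three assertions in order: the chain of equalities, the integral-closure implication, and the $p_g$-ideal case. Throughout, write $Q=(a,b)$ for a regular sequence, which is automatic since minimal reductions of $\m$-primary ideals in our $2$-dimensional Cohen--Macaulay setting are parameter ideals (and minimal reductions exist because the residue field is algebraically closed, hence infinite).

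For $\core(I)=Q^2\colon I$, I would invoke the general formula of Corso--Polini--Ulrich and Huneke--Swanson: in a Cohen--Macaulay local ring, if $I^{r+1}=QI^r$ for a minimal reduction $Q$, then $\core(I)=Q^{r+1}\colon I^r$; with $r=1$ this gives $\core(I)=Q^2\colon I$. The inclusion $(Q\colon I)Q\subseteq(Q\colon I)I$ is immediate from $Q\subseteq I$, while $(Q\colon I)I\subseteq Q^2\colon I$ follows from
\[
\bigl((Q\colon I)I\bigr)\cdot I \subseteq (Q\colon I)\cdot I^2=(Q\colon I)\cdot QI\subseteq Q\cdot Q=Q^2.
\]
For the reverse inclusion $Q^2\colon I\subseteq(Q\colon I)Q$: if $c\in Q^2\colon I$, then $cQ\subseteq cI\subseteq Q^2$ forces $c\in Q^2\colon Q=Q$ by the colon identity for regular sequences. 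Writing $c=ap+bq$ and expanding $cx\in Q^2$ for each $x\in I$, the Koszul syzygy for $(a,b)$ yields $px,qx\in Q$ for every $x\in I$, so $p,q\in Q\colon I$ and hence $c\in(Q\colon I)Q$.

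For the moreover-statement, assume $(Q\colon I)Q=\core(I)$ is integrally closed and take $z\in\overline{Q\colon I}$. Since $\overline{J}\cdot K\subseteq\overline{JK}$ for ideals, we get $zQ\subseteq\overline{(Q\colon I)Q}=(Q\colon I)Q$, so we can write $za=a\alpha+b\beta$ and $zb=a\gamma+b\delta$ with $\alpha,\beta,\gamma,\delta\in Q\colon I$. Applying the Koszul syzygy to the first relation gives $z=\alpha+bc$ and $\beta=ac$ for some $c\in A$; substituting into the second and applying the syzygy once more yields $bc=\delta-\alpha+ae$ for some $e\in A$. Since $\delta-\alpha\in Q\colon I$ and $a\in Q$, we get $bc\cdot I\subseteq(\delta-\alpha)I+aeI\subseteq Q+Q=Q$, so $bc\in Q\colon I$ and therefore $z=\alpha+bc\in Q\colon I$.

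Finally, when $I$ is a $p_g$-ideal I would invoke \thmref{Q:I} (to be established later in this section), which shows that $Q\colon I$ is itself a $p_g$-ideal. Then $\core(I)=(Q\colon I)\cdot I$ is a product of two $p_g$-ideals, hence again a $p_g$-ideal by \corref{c:sg}(1), and in particular integrally closed. The main obstacle is the Koszul bookkeeping in the reverse inclusion $Q^2\colon I\subseteq(Q\colon I)Q$ and in the integral-closure descent: both rely on using the regular-sequence property of $Q=(a,b)$ twice, but both are short computations once the syzygies are written down carefully.
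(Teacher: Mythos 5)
Your treatment of the first two assertions is sound. For the chain of equalities you cite the Corso--Polini--Ulrich/Huneke--Swanson formula $\core(I)=Q^{r+1}\colon I^r$ and then verify $(Q\colon I)Q\subseteq (Q\colon I)I\subseteq Q^2\colon I\subseteq (Q\colon I)Q$ directly; the paper simply cites \cite[5.6]{OWYgood} for the whole display, so your version is if anything more self-contained, and the Koszul bookkeeping checks out. For the ``moreover'' statement your argument is essentially the paper's: from $z\in\overline{Q\colon I}$ you get $za\in\overline{(Q\colon I)Q}=(Q\colon I)Q$, write $za=a\alpha+b\beta$, and use regularity of $(a,b)$ to get $z-\alpha\in(b)$. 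Note that at this point you are already done, since $(b)\subseteq Q\subseteq Q\colon I$; your second round with the relation $zb=a\gamma+b\delta$ is correct but superfluous.

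The last assertion, however, is where your proposal has a genuine gap: it is circular. You propose to deduce that $\core(I)$ is integrally closed for a $p_g$-ideal $I$ by invoking \thmref{Q:I} to conclude that $Q\colon I$ is a $p_g$-ideal. But the proof of \thmref{Q:I} \emph{opens} with ``By Lemma \ref{l:core}, we know that $Q\colon I$ is integrally closed'' --- and the only way Lemma \ref{l:core} yields that conclusion is through the very statement you are trying to prove (``$\core(I)$ is integrally closed for a $p_g$-ideal'') combined with the ``moreover'' implication. Indeed, without knowing in advance that $Q\colon I$ is integrally closed, \thmref{Q:I} cannot even represent $Q\colon I$ by a cycle on a resolution, so its proof genuinely depends on this lemma. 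You need an independent argument here. The paper's route is: by \proref{p:pgC} the Rees algebra of a $p_g$-ideal is a Cohen--Macaulay normal domain, and then the integral closedness of $\core(I)$ follows from \cite[Proposition 5.5.3]{Hyry-Smith}. Some input of this kind (normality of the Rees algebra, or another external criterion for the core of an ideal with reduction number one to be integrally closed) is indispensable; a forward reference to \thmref{Q:I} cannot supply it.
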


\begin{proof}
The formula \eqref{eq:core} is obtained in \cite[5.6]{OWYgood}. 
Assume that $\core(I)$ is integrally closed and $Q$ is generated by $a,b\in I$.    Let  $x\in A$ be integral over $Q:I$. 
Then $ax$ is integral over $Q (Q:I)$.
Since $(Q:I) Q=(Q:I)I=\core(I)$, 
we have $ax\in Q (Q:I)$.
Thus $ax = au + bv$ for some $u, v \in Q:I$. 
Since $a (x - u) = bv$, 
we have that $x -u \in (b)\subset Q$, and thus $x \in Q:I$.
Hence $Q:I$ is integrally closed.
\end{proof}

\begin{lem}\label{l:DD}
Let $D>0$ be a cycle on $X$. Then we have the following.
\begin{enumerate}
\item $H^0(\cO_D(D))=0$.
\item $h^1(\cO_D(D))=(-D^2+K_XD)/2$.
\item $p_g(A)+h^1(\cO_D(D))=h^1(\cO_X(D))$. 
Therefore $h^1(\cO_X(D))\ge p_g(A)$ and the equality holds if and only if $h^1(\cO_D(D))=0$. 
\item If $h^1(\cO_D(D))=0$, then each connected component of
the support of $D$ contracts to a nonsingular point.
More precisely, if $E_1\le D$ is a $(-1)$-curve, $g\: X\to X'$ the contraction of $E_1$, and if $F:=g_*D\ne 0$, then $h^1(\cO_{F}(F))=0$ and $D=g^*F+nE_1$ with $n=0$ or $1$. 
\end{enumerate}
\end{lem}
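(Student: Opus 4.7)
My plan is to handle (1)--(3) uniformly through the short exact sequence
$$0\to\cO_X\to\cO_X(D)\to\cO_D(D)\to 0,$$
and then reduce (4) to an adjunction computation that exploits the negative definiteness of the intersection form on $E$.

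For (1), I would argue by induction on the coefficient sum $|D|=\sum m_i$. Since $E$ is negative definite and $D>0$, one has $D^2<0$; expanding $D^2=\sum_j m_j(D\cdot E_j)$ forces some component $E_j\le D$ (with $m_j>0$) to satisfy $D\cdot E_j<0$. The short exact sequence
$$0\to\cO_{D-E_j}(D-E_j)\to\cO_D(D)\to\cO_{E_j}(D)\to 0,$$
obtained by tensoring the ideal sequence of $E_j$ in $\cO_D$ with $\cO_X(D)$, reduces the problem to $H^0(\cO_{E_j}(D))=0$, which holds since $\cO_{E_j}(D)$ is a line bundle of negative degree $D\cdot E_j$ on the integral curve $E_j$. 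For (2), Riemann--Roch on $X$ combined with the displayed sequence yields $\chi(\cO_D(D))=\chi(\cO_X(D))-\chi(\cO_X)=(D^2-K_XD)/2$, so by (1) one gets $h^1(\cO_D(D))=-\chi=(-D^2+K_XD)/2$. For (3), the fibers of $f$ have dimension at most $1$, so $R^2f_*\cO_X=0$ and $H^2(X,\cO_X)=0$; combined with (1), the long exact sequence collapses to $0\to H^1(\cO_X)\to H^1(\cO_X(D))\to H^1(\cO_D(D))\to 0$, which is the asserted identity, and the final equivalence is immediate.

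For (4), by (2) the hypothesis $h^1(\cO_D(D))=0$ is equivalent to $K_XD=D^2$. Since $D>0$, negative definiteness gives $D^2<0$, hence $K_XD<0$, so some $E_1\le D$ satisfies $K_XE_1<0$. Adjunction $K_XE_1+E_1^2=2p_a(E_1)-2$ then forces $p_a(E_1)=0$ and $E_1^2=-1$, exhibiting a $(-1)$-curve in each connected component of $\supp D$. For the precise inductive step, let $g\colon X\to X'$ contract such an $E_1$ and set $F=g_*D$. Writing $D=g^*F+mE_1$ for the unique $m\in\Z$ and using $K_X=g^*K_{X'}+E_1$ together with the projection formula, I compute
$$K_XD-D^2=(K_{X'}F-F^2)+m(m-1).$$
Since the left side vanishes and $(K_{X'}F-F^2)/2=h^1(\cO_F(F))\ge 0$ by (2) applied to $(X',F)$, we obtain $m(1-m)\ge 0$; for $m\in\Z$ this forces $m\in\{0,1\}$, and in either case $h^1(\cO_F(F))=0$. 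Iterating the contraction collapses each connected component of $\supp D$ to a smooth point.

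The step I expect to be the main obstacle is the integer pinning $m\in\{0,1\}$ in (4): it rests on the delicate interplay between the equality $K_XD=D^2$ inherited from $h^1=0$, the adjunction change $K_X=g^*K_{X'}+E_1$, and the non-negativity of $h^1(\cO_F(F))$ provided by (2) on $X'$. The remaining parts are essentially formal once the vanishing $H^0(\cO_D(D))=0$ from (1) is in hand.
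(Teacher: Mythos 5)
Your proof is correct, and for parts (2)--(4) it is essentially the paper's own argument: the same sequence $0\to\cO_X\to\cO_X(D)\to\cO_D(D)\to 0$ for (3), Riemann--Roch for (2), and in (4) the identical identity $K_XD-D^2=(K_{X'}F-F^2)+m(m-1)$ with both summands nonnegative pinning $m\in\{0,1\}$. The genuine divergence is in (1): the paper deduces $H^0(\cO_D(D))=0$ from adjunction and Serre duality on $D$ (giving $h^0(\cO_D(D))=h^1(\cO_D(K_X))$) together with the surjection $H^1(X,\cO_X(K_X))\to H^1(D,\cO_D(K_X))$ and Grauert--Riemenschneider vanishing, whereas you induct on $\sum m_i$, using negative definiteness of the intersection form to find a component $E_j\le D$ with $D\cdot E_j<0$ and then the decomposition sequence. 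Your route is more elementary --- it avoids duality and GR vanishing (which in positive characteristic rests on Giraud's theorem, as the paper notes elsewhere) --- at the cost of invoking Mumford's negative definiteness; both are standard and correct. Two small points worth tightening: in (2), $\chi(\cO_X(D))$ and $\chi(\cO_X)$ are not literally defined since $X$ is non-compact and the $H^0$'s are infinite-dimensional, so one should apply Riemann--Roch directly on the projective scheme $D$, namely $\chi(\cO_D(D))=D^2+\chi(\cO_D)=(D^2-K_XD)/2$, which is clearly what you intend; and in (4) your adjunction step as written produces one $(-1)$-curve in $D$ rather than one in each connected component --- but since $-D^2+K_XD$ splits as a sum of nonnegative contributions over the connected components, each contribution vanishes, and in any case your iteration terminates only when $F=0$, so the full statement does follow.
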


\begin{proof} Although the claim (1) is stated in \cite{wahl.vanish},
 we will give a short proof for the convenience
 of the readers.  By adjunction, $K_D = (K_X + D)|_D$ 
and by duality on $D$, $h^0(\cO_D(D)) = h^1(\cO_D(K_X))$.  
Since $H^1(X,\cO_X(K_X)) \to H^1(D, \cO_D(K_X))$ is surjective and 
  $H^1(X,\cO_X(K_X))=0$ by Grauert-Riemenschneider vanishing theorem, we have the desired 
  result.
The statement (2) follows from (1) and 
 the Riemann-Roch formula: $h^1(\cO_D(D))=-\chi(\cO_D(D))=(-D^2+K_XD)/2$.
\par
The assertion (3) follows from (1) and the exact sequence
$$
0 \to \cO_X\to \cO_X(D)\to \cO_D(D)\to 0.
$$

(4) If $D$ contains no $(-1)$-curve, then 
$K_XD\ge 0$, and thus $2h^1(\cO_D(D))=-D^2+K_XD\ge -D^2>0$.
Assume that $2h^1(\cO_D(D))=-D^2+K_XD=0$ and 
$E_1$ be a $(-1)$-curve in $D$. Let $g\: X\to X'$ be the 
contraction of $E_1$ and $F=g_*D$. 
Then there exists an integer $n$ such that $D=g^*F+nE_1$.
We have 
$$
-D^2+K_XD=-F^2+K_{X'}F+n^2-n.
$$
Since $n^2-n\ge 0$ for any $n\in \Z$ and $-F^2+K_{X'}F\ge 0$ by (2), we obtain that $-F^2+K_{X'}F=n^2-n=0$.
Therefore we can inductively contract all components of $D$ 
to nonsingular points.  
\end{proof}

The following lemma is essential for our main theorem of this section.

\begin{lem}\label{Q:I;key}
  Let $I=I_Z$ be a $p_g$-ideal represented on some resolution $X$, and $Q$ a minimal reduction of $I$.
Assume that $\cO_X(-Z+Y)$ is generated for some cycle $0<Y<Z$ on $X$.
 Then the following conditions are equivalent. 
\begin{enumerate}
\item $I_ZI_{Z-Y}=Q I_{Z-Y}$.
\item $h^1(\cO_X(Y)) = p_g(A)$ and $I_{Z-Y}$ is a $p_g$-ideal.
\end{enumerate}
Note that the first condition implies $I_{Z-Y} \subset Q:I_Z$.
\end{lem}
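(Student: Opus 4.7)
The plan is to exploit the Koszul complex associated with a minimal reduction $Q=(a,b)$ of $I_Z$, after twisting by $\cO_X(Y)$, and then to compare cohomology lengths against the standard bounds on $h^1$. First, I would choose $a,b\in I_Z$ to be a general pair generating $Q$. Since $\cO_X(-Z)$ is globally generated, a general pencil in $I_Z$ has empty base locus on $X$, so the Koszul complex
\[
0\to \cO_X\to \cO_X(-Z)^{\oplus 2}\to \cO_X(-2Z)\to 0
\]
is exact as a sequence of sheaves on $X$. Tensoring with $\cO_X(Y)$ and taking cohomology (noting that $H^2(X,\cF)=0$ for any coherent sheaf, since the fibers of $f$ have dimension at most one), the long exact sequence yields
\[
0\to I_{2Z-Y}/QI_{Z-Y}\to H^1(\cO_X(Y))\to H^1(\cO_X(-Z+Y))^{\oplus 2}\to H^1(\cO_X(-2Z+Y))\to 0,
\]
where the image of $I_{Z-Y}^{\oplus 2}\to I_{2Z-Y}$ is exactly $aI_{Z-Y}+bI_{Z-Y}=QI_{Z-Y}$.

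Next, because $Z$ is a $p_g$-cycle and $\cO_X(-(Z-Y))$ is generated, \proref{p:sg}(1) applies with $Z'=Z-Y$, giving $I_{2Z-Y}=I_Z I_{Z-Y}$ (the product is already integrally closed) and $h^1(\cO_X(-2Z+Y))=h^1(\cO_X(-Z+Y))$. Taking alternating lengths in the four-term sequence above yields
\[
\ell_A\bigl(I_Z I_{Z-Y}/QI_{Z-Y}\bigr)=h^1(\cO_X(Y))-h^1(\cO_X(-Z+Y)).
\]
By \lemref{l:DD}(3) applied to the effective cycle $Y>0$, $h^1(\cO_X(Y))\ge p_g(A)$, and by \proref{p:le-cycle} applied to the generated sheaf $\cO_X(-(Z-Y))$, $h^1(\cO_X(-Z+Y))\le p_g(A)$. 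Hence the right-hand side is non-negative and vanishes precisely when both inequalities are equalities, i.e., $h^1(\cO_X(Y))=p_g(A)$ and $I_{Z-Y}$ is a $p_g$-ideal. Since vanishing of the left-hand side is exactly condition (1), this gives the equivalence (1)$\Leftrightarrow$(2); the final statement $I_{Z-Y}\subset Q\colon I_Z$ under (1) is immediate from $I_Z I_{Z-Y}=QI_{Z-Y}\subset Q$.

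The main obstacle will be justifying exactness of the twisted Koszul sequence as sheaves on $X$, which requires that a general pair $a,b\in I_Z$ generating $Q$ has empty common zero locus on $X$ and forms a regular sequence in each local ring; this follows from global generation of $\cO_X(-Z)$ together with the two-dimensionality of $X$. A secondary bookkeeping point is to ensure that $Z-Y$ is a nonzero effective cycle with $\cO_X(-(Z-Y))$ generated, so that \proref{p:sg}(1) is applicable; both are implicit in the statement's use of $I_{Z-Y}$ together with the hypothesis on $\cO_X(-Z+Y)$.
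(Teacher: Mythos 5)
Your proposal is correct and follows essentially the same route as the paper: the Koszul complex of a minimal reduction twisted by $\cO_X(Y)$, the identity $h^1(\cO_X(-2Z+Y))=h^1(\cO_X(-Z+Y))$ from \proref{p:sg}, and the squeeze $h^1(\cO_X(-Z+Y))\le p_g(A)\le h^1(\cO_X(Y))$ from \proref{p:le-cycle} and \lemref{l:DD}. The only cosmetic difference is that you extract the conclusion from an alternating sum of lengths in the four-term sequence, whereas the paper phrases it as injectivity of $H^1(\cO_X(Y))\to H^1(\cO_X(-Z+Y))^{\oplus 2}$; these are equivalent, and your worry about exactness of the Koszul complex is settled by noting $Q\cO_X=I\cO_X=\cO_X(-Z)$ since $Q$ is a reduction of $I$.
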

\begin{proof}  Assume $Q=(a,b)$ and consider the exact sequence  
\[
0\to \cO_X\to \cO_X(-Z)^{\oplus 2}\to \cO_X(-2Z)\to 0,
\]
where the map $\cO_X(-Z)^{\oplus 2}\to \cO_X(-2Z)$ is defined by  
$(x,y)\mapsto ax+by$. Tensoring $\cO_X(Y)$ to this exact sequence, we obtain  
\[
0\to \cO_X(Y)\to \cO_X(-Z+Y)^{\oplus 2}\to \cO_X(-2Z +Y)\to 0.
\]
Note that $I_ZI_{Z-Y}=I_{2Z-Y}$ and $h^1(\cO_X(-Z+Y))= h^1(\cO_X(-2Z+Y))$ by \proref{p:sg}.
Now, taking the long exact sequence of this sequence, we see that $I_ZI_{Z-Y}=Q I_{Z-Y}$
if and only if  $H^1(\cO_X(Y))\to H^1(\cO_X(-Z+Y)^{\oplus 2})$ is injective, and this is also equivalent to 
that 
\[
2 h^1(\cO_X(-Z+Y)) = h^1(\cO_X(Y)) +  h^1(\cO_X(-2Z +Y)).
\]
Thus (1) is equivalent to that
$h^1(\cO_X(-Z+Y)) = h^1(\cO_X(Y))$.
On the other hand, we have 
 $h^1(\cO_X(-Z+Y)) \le p_g(A) \le h^1(\cO_X(Y))$
by \proref{p:le-cycle} and \lemref{l:DD}. Therefore, the 
condition (1) is equivalent to the equalities
\[
h^1(\cO_X(-Z+Y)) = p_g(A) = h^1(\cO_X(Y)),
\]
namely, the condition (2).  
\end{proof}

Now we can state the main theorem of this section.
 
\begin{thm}\label{Q:I} Let $I=I_Z$ be a $p_g$-ideal, where $Z$ is a 
$p_g$-cycle on a resolution $X$ of $\Spec A$.  Then $\core(I)$ and $Q:I$ are also represented on $X$. Furthermore,
we can write $Q:I= I_{Z-Y}$ and $\core(I)=I_{2Z-Y}$, 
where $Y$ is the maximal positive cycle on $X$ satisfying the following two conditions:
\begin{enumerate} 
\item $h^1(\cO_X(Y))= p_g(A)$ and 
\item $I_{Z-Y}$ is a $p_g$-ideal.
\end{enumerate}
In particular, if $I$ is a $p_g$-ideal, so are $Q:I$ and $\core(I)$.
\end{thm}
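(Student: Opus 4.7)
The plan is to isolate the cycle $Y_{\max}$ first, then identify $Q:I$ and $\core(I)$ using \lemref{Q:I;key} and \lemref{l:core}, with maximality falling out almost tautologically.

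First, let $\mathcal{Y}$ be the set of effective cycles $Y$ on $X$ with $0 \le Y \le Z$ satisfying (1) and (2); it is nonempty since $Y=0$ qualifies. I would begin by reformulating the defining conditions using earlier results: by \lemref{l:DD}(3), condition (1) is equivalent to $h^1(\cO_Y(Y))=0$, and combining \proref{p:CX} with the hypothesis that $Z$ is a $p_g$-cycle, condition (2) reduces to requiring $\cO_{C_X}(Y)\cong \cO_{C_X}$ together with global generation of $\cO_X(-(Z-Y))$; once $\cO_X(-(Z-Y))$ is generated, \proref{p:le-cycle} forces $h^1(\cO_X(-(Z-Y)))=p_g(A)$ automatically, since $h^1(\cO_X(-Z))=p_g(A)$. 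Using these reformulations I would show that $\mathcal{Y}$ is closed under the componentwise maximum of cycles, yielding a largest element $Y_{\max}\in\mathcal{Y}$.

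Applying \lemref{Q:I;key} to $Y_{\max}$ gives $I_Z\cdot I_{Z-Y_{\max}}=Q\cdot I_{Z-Y_{\max}}\subseteq Q$, whence $I_{Z-Y_{\max}}\subseteq Q:I$. Since conditions (1) and (2) are independent of the chosen minimal reduction, the equality $I_Z\cdot I_{Z-Y_{\max}}=Q'\cdot I_{Z-Y_{\max}}$ holds for every minimal reduction $Q'$ of $I$, so $I_Z\cdot I_{Z-Y_{\max}}\subseteq Q'$ for all such $Q'$ and therefore lies in $\core(I)$. By \proref{p:sg}(1), $I_Z\cdot I_{Z-Y_{\max}}=I_{2Z-Y_{\max}}$, giving $I_{2Z-Y_{\max}}\subseteq \core(I)$.

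For the reverse inclusions, I use that $\core(I)$ is integrally closed by \lemref{l:core}, so $\core(I)=I_V$ for some cycle $V$ (a priori on a resolution dominating $X$). Because $I^2=I_{2Z}\subseteq \core(I)$, one has $V\le 2Z$, so $V=2Z-Y^{\ast}$ with $Y^{\ast}\ge 0$. I would then verify that $Y^{\ast}$ is a cycle on $X$ satisfying (1) and (2); by the maximality of $Y_{\max}$ this forces $Y^{\ast}\le Y_{\max}$, equivalently $V\ge 2Z-Y_{\max}$, so $\core(I)=I_{2Z-Y_{\max}}$ and is represented on $X$. Finally, from $\core(I)=(Q:I)\cdot I$ (\lemref{l:core}) combined with $\core(I)=I_Z\cdot I_{Z-Y_{\max}}$ (\proref{p:sg}(1)), one deduces $Q:I=I_{Z-Y_{\max}}$, which is a $p_g$-ideal by membership of $Y_{\max}$ in $\mathcal{Y}$; then $\core(I)$ is a $p_g$-ideal by \corref{c:sg}(1).

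The main obstacle is the reverse inclusion step: showing that the cycle $Y^{\ast}=2Z-V$ arising from an arbitrary representation of $\core(I)$ descends to $X$ and lies in $\mathcal{Y}$, i.e., that $\core(I)$ already admits a representation on $X$ itself rather than on a strictly higher blow-up. This is where the $p_g$-cycle hypothesis on $Z$ together with the cohomological bounds in \proref{p:le-cycle} and \lemref{l:DD} must be used to rule out contributions supported on extra exceptional components.
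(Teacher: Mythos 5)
Your forward inclusions are fine, but the proposal has two genuine gaps, both located exactly where the real work of the theorem lies.

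First, the existence of $Y_{\max}$. You assert that the set $\mathcal{Y}$ is closed under componentwise maximum, but none of your reformulated conditions is visibly stable under $\max$: neither $h^1(\cO_Y(Y))=0$, nor $\cO_{C_X}(Y)\cong\cO_{C_X}$, nor global generation of $\cO_X(-(Z-Y))$ behaves well under the lattice operation (note $\max(Y_1,Y_2)\ne Y_1+Y_2$, so additivity arguments do not apply). The paper avoids this entirely: it first produces one distinguished cycle $Y_0$ with $Q:I=I_{Z-Y_0}$, checks via \lemref{l:core} and \lemref{Q:I;key} that $Y_0$ satisfies (1) and (2), and then gets maximality for free, since any $Y$ satisfying (1) and (2) yields $I_{Z-Y}\subset Q:I=I_{Z-Y_0}$ by \lemref{Q:I;key}, hence $Y\le Y_0$. (A smaller slip: your parenthetical claim that global generation of $\cO_X(-(Z-Y))$ alone forces $h^1(\cO_X(-(Z-Y)))=p_g(A)$ via \proref{p:le-cycle} is false --- that proposition gives only the inequality $\le p_g(A)$, and e.g.\ $Z-Y$ could be the pullback of a non-$p_g$ anti-nef cycle as in \exref{ex:244}; it is the additional condition $\cO_{C_X}(Y)\cong\cO_{C_X}$ from \proref{p:CX} that makes the reformulation correct.)

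Second, and more seriously, the reverse inclusion. You correctly identify that the crux is showing $\core(I)$ (equivalently $Q:I$) is represented already on $X$ with the associated $Y^{\ast}$ satisfying (1) and (2), but you only state the intention; no argument is given, and the step $Y^{\ast}\le Y_{\max}$ is empty without it. The paper's mechanism, which is what you are missing, is this: represent $I$ and the integrally closed ideal $Q:I\supseteq I$ simultaneously by $Z'$ and $Z'-Y$ on a resolution $X'$ minimal for the pair. Since $(Q:I)I=(Q:I)Q$ by \lemref{l:core}, \lemref{Q:I;key} applies and forces $h^1(\cO_{X'}(Y))=p_g(A)$, i.e.\ $h^1(\cO_Y(Y))=0$; then \lemref{l:DD}(4) shows $Y$ contracts to nonsingular points, so $YC\le 0$ for every $(-1)$-curve $C\le Y$. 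This is precisely what excludes $Y$ having support over a $(-1)$-curve $C$ with $Z'C=0$, so the minimal representation of the pair coincides with the minimal representation of $I$, which $X$ dominates. Finally, your cancellation of $I$ from $(Q:I)\cdot I=I_{Z-Y_{\max}}\cdot I$ to conclude $Q:I=I_{Z-Y_{\max}}$ is not valid as stated (ideals do not cancel in products); it can be repaired because $Q:I$ is integrally closed and $Z$ is a $p_g$-cycle, so \proref{p:sg} turns the product equality into an equality of anti-nef cycles, but this needs to be said.
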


\begin{proof}  
By \proref{p:pgC}, 
the Rees algebra $\cR(I)$ is normal and Cohen-Macaulay.
Hence $\core(I)$ is integrally closed by 
\cite[Proposition 5.5.3]{Hyry-Smith}.
It follows from Lemma \ref{l:core} that $Q: I$ is also integrally closed. By \corref{c:sg}, we have $I\subset Q:I$.
Suppose that $I$ and $Q: I$ are represented by cycles $Z'$ and 
$Z'-Y$ on some resolution $X'$, respectively, where $Y$ is an effective cycle on $X'$. 
Note that $Y$ satisfies the condition (1) of \lemref{Q:I;key} by \eqref{eq:core}.
We may assume that $Z'C<0$ or $(Z'-Y)C<0$ for any $(-1)$-curve $C$ on $X'$.
If $C$ is a $(-1)$-curve and $Z'C=0$, then $YC>0$.
However, this is impossible because $Y$ blows down to a nonsingular point and $YC\le 0$ if $C\le Y$ by \lemref{l:DD}.
Thus $I$ is minimally represented on $X'$.
Hence there exists a birational morphism $\alpha\: X\to X'$ and
$Q:I$ is also represented on the resolution $X$ by $\alpha^*(Z'-Y)=Z-\alpha^*Y$.
Thus we can write $Q:I= I_{Z-Y_0}$, where $Y_0=\alpha^*Y\ge 0$. 
By \eqref{eq:core} and \lemref{Q:I;key}, 
we see that $\core(I)=I_{2Z-Y_0}$ and that 
$Y_0$ satisfies our conditions (1) and (2).
On the other hand, if a cycle $Y\ge 0$ on $X$ also satisfies our conditions, then  \lemref{Q:I;key} implies that $I_{Z-Y}\subset Q: I=I_{Z-Y_0}$.
Hence we must have $Y\le Y_0$. 
\end{proof}

Note that even if $I$ is minimally represented on $X$, $Q:I$ is not necessarily minimally represented on $X$.
In \thmref{algorithm}, 
we shall give an algorithm to calculate $Q: I$ and  a minimal good ideal containing $I$ 
starting from a $p_g$-ideal $I= I_Z$.

In the situation of \thmref{Q:I}, $I$ is a good ideal if and only if $Y=0$.
Hence we obtain the following. 

\begin{cor}\label{c:cores}
Assume that $I$ is a $p_g$-ideal.
Then the following are equivalent.
\begin{enumerate}
\item $I$ is a good ideal.
\item $\core(I)=I^2$.
\item $\core(I^2)=\core(I)^2$.
\end{enumerate}
\end{cor}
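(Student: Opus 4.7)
The plan is to translate each condition into a cycle-theoretic statement using \thmref{Q:I}. Fix a resolution $X$ on which $I = I_Z$ is represented by a $p_g$-cycle $Z$, and write $Q\colon I = I_{Z-Y}$ and $\core(I) = I_{2Z-Y}$ where $Y \ge 0$ is the maximal cycle from \thmref{Q:I}. I shall repeatedly use that $I^n = I_{nZ}$ by \proref{p:sg}(2), that products of $p_g$-ideals are $p_g$-ideals whose $p_g$-cycle is the sum of the factors' $p_g$-cycles by \proref{p:sg}(1), and that equality of $p_g$-ideals forces equality of the representing $p_g$-cycles. The equivalence \emph{(1) $\Leftrightarrow$ (2)} is then immediate: $I$ is good iff $Q\colon I = I$ iff $Y=0$ iff $\core(I) = I_{2Z} = I^2$; and \emph{(4) $\Rightarrow$ (3)} is the specialization $I' = I$.

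For \emph{(3) $\Rightarrow$ (1)}, since $I^2$ is a $p_g$-ideal with $p_g$-cycle $2Z$, \thmref{Q:I} yields $\core(I^2) = I_{4Z - \widetilde{Y}}$ for some effective $\widetilde Y \ge 0$ satisfying the conditions of \thmref{Q:I} applied to $I^2$, while $\core(I)^2 = I_{2Z-Y}^2 = I_{4Z-2Y}$ by \proref{p:sg}(1). Condition (3) then forces $\widetilde Y = 2Y$; in particular $h^1(\cO_X(2Y)) = p_g(A)$, equivalently $h^1(\cO_{2Y}(2Y)) = 0$ by \lemref{l:DD}(3). Since $Y$ is itself a candidate in \thmref{Q:I} for $I$, $h^1(\cO_Y(Y))=0$, i.e., $K_X Y = Y^2$ by \lemref{l:DD}(2). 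A direct Riemann-Roch expansion then gives $h^1(\cO_{2Y}(2Y)) = \tfrac{1}{2}(-(2Y)^2 + K_X(2Y)) = -Y^2$, so $Y^2 = 0$, and the negative-definiteness of the intersection form on $E$ forces $Y=0$, whence $I$ is good.

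For \emph{(1) $\Rightarrow$ (4)}, assume $I$ is good and let $I'$ be a $p_g$-ideal with $p_g$-cycle $Z'$ and corresponding cycle $Y'$. Then $II'$ is a $p_g$-ideal with $p_g$-cycle $Z+Z'$, and \thmref{Q:I} gives $\core(II') = I_{2(Z+Z')-W}$ for some $W \ge 0$, while $\core(I)\core(I') = I^2 \cdot I_{2Z'-Y'} = I_{2(Z+Z')-Y'}$. Hence (4) reduces to the cycle identity $W = Y'$. One inequality $W \ge Y'$ is immediate, since $Y'$ is a candidate for $W$: the sum $Z+(Z'-Y')$ is a sum of $p_g$-cycles, hence $p_g$ by \proref{p:sg}(2). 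For the reverse $W \le Y'$, decompose $W = W_1 + W_2$ componentwise with $W_1 = \max(W-Z',0)$ and $W_2 = \min(W,Z')$, so $W_1 \le Z$, $W_2 \le Z'$, and $(Z+Z')-W = (Z-W_1)+(Z'-W_2)$. Since the right-hand side is a $p_g$-cycle, \proref{p:sg}(2) makes each nonzero summand a $p_g$-cycle; in particular $I_{Z-W_1}$ is a $p_g$-ideal, so if additionally $h^1(\cO_X(W_1))=p_g(A)$, then $W_1$ is a candidate in \thmref{Q:I} for $I$, and goodness of $I$ forces $W_1=0$. Then $W = W_2 \le Z'$, and $W$ itself becomes a candidate for $Y'$, giving $W \le Y'$.

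The main obstacle is precisely this cohomological verification $h^1(\cO_X(W_1))=p_g(A)$, i.e., $h^1(\cO_{W_1}(W_1))=0$. Starting from $h^1(\cO_W(W))=0$, a Riemann-Roch expansion via \lemref{l:DD}(2) yields the identity $W_1 \cdot W_2 = h^1(\cO_{W_1}(W_1)) + h^1(\cO_{W_2}(W_2))$, so both summands vanish iff $W_1\cdot W_2 = 0$. I expect this vanishing to follow from a careful componentwise analysis of the decomposition together with the negative-definiteness of the intersection form on $E$, possibly after refining the decomposition to achieve disjoint supports for $W_1$ and $W_2$; the degenerate cases where $Z-W_1=0$ or $Z'-W_2=0$ will be handled by direct inspection.
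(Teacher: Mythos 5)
Your translation of the problem into cycle language is the right one, and three of the four implications are handled correctly: (1) $\Leftrightarrow$ (2) and (4) $\Rightarrow$ (3) are immediate as you say, and your proof of (3) $\Rightarrow$ (1) --- forcing $\widetilde Y = 2Y$, then combining $h^1(\cO_{2Y}(2Y))=0$ with $h^1(\cO_Y(Y))=0$ via \lemref{l:DD}(2) to get $Y^2=0$, hence $Y=0$ by negative definiteness --- is correct and is a nice self-contained argument. (The paper records the corollary without a written proof, as an immediate consequence of ``$I$ is good iff $Y=0$''; the real substance is supplied by the explicit computation of $Y$ in \thmref{algorithm}.)

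The genuine gap is in (1) $\Rightarrow$ (4), specifically the inequality $W \le Y'$, and you flag it yourself. Two things are missing. First, to conclude from \proref{p:sg}(2) that $Z-W_1$ and $Z'-W_2$ are $p_g$-cycles you must already know that $\cO_X(-(Z-W_1))$ and $\cO_X(-(Z'-W_2))$ are generated: that is a standing hypothesis of that proposition, not part of its conclusion, and your componentwise truncation gives no control on global generation. Second, and more seriously, the vanishing $h^1(\cO_X(W_1))=p_g(A)$, equivalently $W_1\cdot W_2=0$, is exactly the content of the claim and is left unproved; with $W_1=\max(W-Z',0)$ and $W_2=\min(W,Z')$ the two cycles can share components, in which case $W_1\cdot W_2=0$ is far from automatic. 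The clean repair is to quote the proof of \thmref{algorithm}: there $Y$ is computed explicitly as $\sum_i \min\{1,-ZF_i\}F_i$, where the $F_i$ are the total transforms of the $(-1)$-curves not meeting the cohomological cycle --- data depending only on $X$, not on $Z$. Since $\min\{1,b+b'\}=\max\{\min\{1,b\},\min\{1,b'\}\}$ for nonnegative integers, one gets $Y(Z+Z')=\max\{Y(Z),Y(Z')\}$ componentwise; goodness of $I$ means $Y(Z)=0$, hence $W=Y(Z+Z')=Y(Z')=Y'$, and (4) follows. (The same formula gives $Y(2Z)=Y(Z)$ and hence an alternative route to (3) $\Rightarrow$ (1).)
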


\begin{thm}\label{algorithm} Let $I=I_Z$ be a $p_g$-ideal represented on $X$ and let
$Q$ be a minimal reduction of $I$. 
\begin{enumerate} 
\item Let $X=X_1\to X_2\to \cdots \to X_{n+1}$ be the sequence of contractions, where each 
$f_i: X_i \to X_{i+1}$ is a contraction of a $(-1)$-curve $E_i$ on $X_i$, which 
does not intersect the cohomological cycle $C_{X_i}$. 
This process stops when every $(-1)$-curve on $X_{n+1}$ intersects $C_{X_{n+1}}$.
Let $K$ be the relative canonical divisor $K_{X/X_{n+1}}$ of the composite $f: X \to X_{n+1}$. 
\par
Then $Q: I = H^0(X, \cO_X( -Z +K))$. Note that $Z-K$ is anti-nef if and only if any $(-1)$-curve $C$ on $X$ satisfies $ZC<0$; in this case, $K$ coincides with $Y$ in \thmref{Q:I}. 
Also, $f_*Z$ is a 
$p_g$-cycle 
and $I_{f_*Z}$ is the minimal 
good ideal containing $I_Z$. In particular, every two-dimensional excellent normal local domain has good ideals.

\item Let $C$ be a $(-1)$-curve on $X$ which does not intersect $C_X$ such that $ZC = - m<0$ and $g: X\to X'$ the contraction of $C$.
Then $g_*Z$ is also a $p_g$-cycle.
Let $I' = I_{g_*Z}$ and $Q'$ a minimal reduction of $I'$.  In this case, if $Q': I' = I_{Z'}$ 
for some cycle $Z'$ on $X'$, then 
$Q: I = I_{g^*Z' + (m-1)C}$.
\end{enumerate}
\end{thm}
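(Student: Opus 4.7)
The plan is to prove part (2) first, which handles a single $(-1)$-curve contraction, and obtain part (1) by iterating the process. Part (2) contains the essential computation; part (1) will follow by unfolding the recursion. For part (2), I would first verify that $g_*Z$ is a $p_g$-cycle on $X'$: since $C$ does not meet $C_X$, \proref{p:CC} shows that $g$ is an isomorphism in a neighborhood of $C_X$ and $C_{X'}$ is identified with $g_*C_X$. Using $Z = g^*g_*Z + mC$, the projection formula, and the vanishing $R^1g_*\cO_X(-mC) = 0$, one checks that $\cO_{X'}(-g_*Z)$ has no fixed components, and the isomorphism near $C_X$ transports $\cO_{C_X}(-Z) \cong \cO_{C_X}$ to $\cO_{C_{X'}}(-g_*Z) \cong \cO_{C_{X'}}$, so $g_*Z$ is a $p_g$-cycle by \proref{p:CX}. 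Applying \thmref{Q:I} to $I'$ then yields the maximal cycle $Y' = g_*Z - Z'$ on $X'$ with $Q':I' = I_{Z'}$.

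The heart of part (2) is to show that $Y := g^*Y' + C$ is the maximal cycle on $X$ satisfying the two conditions of \thmref{Q:I} for $I$; granting this, the identity $Z - g^*Y' - C = g^*(g_*Z - Y') + (m-1)C = g^*Z' + (m-1)C$ yields $Q:I = I_{g^*Z' + (m-1)C}$. For condition (1), the short exact sequence $0 \to \cO_X \to \cO_X(C) \to \cO_C(C) \to 0$ together with $\cO_C(C) \cong \cO_{\PP^1}(-1)$ gives $R^1g_*\cO_X(C) = 0$, and combined with the Leray spectral sequence and the projection formula this yields $h^1(\cO_X(g^*Y' + C)) = h^1(\cO_{X'}(Y')) = p_g(A)$. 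For condition (2) ($I_{Z-Y}$ is a $p_g$-ideal), the fact that $C \cdot C_X = 0$ gives $C|_{C_X} = 0$, and the isomorphism near $C_X$ transports $\cO_{C_{X'}}(-g_*Z + Y') \cong \cO_{C_{X'}}$ (from the $p_g$-ideal property of $I_{Z'}$) to $\cO_{C_X}(-Z + g^*Y' + C) \cong \cO_{C_X}$, whence \proref{p:CX} applies.

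Maximality of $Y$ is the principal obstacle. Suppose $\tilde Y \ge 0$ satisfies both conditions of \thmref{Q:I} on $X$. Then \lemref{l:DD}(3)-(4) force the support of $\tilde Y$ to consist of successively contractible $(-1)$-curves, and the triviality $\cO_{C_X}(\tilde Y) \cong \cO_{C_X}$ (from condition (2) combined with $Z$ being a $p_g$-cycle) gives $\tilde Y \cdot F = 0$ for each component $F$ of $C_X$, which forces the support of $\tilde Y$ to be disjoint from $C_X$. Writing $\tilde Y = g^*(g_*\tilde Y) + bC$, the analogous Leray/projection computation (with a separate treatment for $b < 0$ using the surjection $H^1(\cO_{X'}(g_*\tilde Y) \otimes \m_p^{|b|}) \twoheadrightarrow H^1(\cO_{X'}(g_*\tilde Y))$ coming from the sequence $0 \to \m_p^{|b|} \to \cO_{X'} \to \cO_{X'}/\m_p^{|b|} \to 0$) shows $g_*\tilde Y$ satisfies both conditions on $X'$; hence $g_*\tilde Y \le Y'$ by maximality of $Y'$. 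Finally, \lemref{l:DD}(4) bounds $b$ by $1$ when $C \le \tilde Y$, and $b \le 0$ otherwise, so $\tilde Y \le g^*Y' + C$.

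Part (1) then follows by iterating part (2) along $X = X_1 \to \cdots \to X_{n+1}$: the pushforward at each stage remains a $p_g$-cycle, and unfolding the recursion $Y_i = g_i^*Y_{i+1} + E_i$ together with $K_{X_i/X_{i+1}} = E_i$ yields $Y_1 = K = K_{X/X_{n+1}}$, giving $Q:I = H^0(X, \cO_X(-Z + K))$. At the terminal $X_{n+1}$, every $(-1)$-curve meets $C_{X_{n+1}}$; a nonzero cycle satisfying condition (1) on $X_{n+1}$ must contain a $(-1)$-curve $E$ by \lemref{l:DD}(4), but then $E \cdot F > 0$ for some component $F$ of $C_{X_{n+1}}$ would force $\tilde Y \cdot F > 0$, contradicting condition (2). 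Thus the maximal cycle at $X_{n+1}$ is $0$, so $I_{f_*Z}$ is a good ideal; the minimality among good ideals coming from the algorithm follows from the fact that each intermediate $I_{Z_i}$ is not yet good (one can contract further). Existence of good ideals in any such $A$ then follows by applying the algorithm to any $p_g$-ideal, whose existence is established in \sref{s:pgexs}.
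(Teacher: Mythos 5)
Your overall route is genuinely different from the paper's. The paper proves part (1) directly: it sets $Y=\sum a_iF_i$ with $F_i$ the total transform of $E_i$ and $a_i=\min\{1,-ZF_i\}$, interpolates a decreasing chain of anti-nef cycles $Z=Z_0\ge\dots\ge Z_b=f^*f_*Z$ whose successive differences are the $F_i$, and uses the exact sequences $0\to\cO_X(-Z_i)\to\cO_X(-Z_{i+1})\to\cO_{C_i}(-Z_{i+1})\to 0$ together with $h^1(\cO_X(-Z_0))=h^1(\cO_X(-Z_b))=p_g(A)$ to show every $Z_i$ is a generated $p_g$-cycle; part (2) is then read off from the relation $Y=g^*Y'+C$. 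You instead prove the single-blowdown statement (2) first by Leray/projection-formula computations and deduce (1) by induction. Your verification that $g_*Z$ is a $p_g$-cycle, that $g^*Y'+C$ satisfies conditions (1) and (2) of \thmref{Q:I}, and your maximality argument via $\tilde Y=g^*(g_*\tilde Y)+bC$ and \lemref{l:DD}(4) are all workable (though the step ``$\tilde Y\cdot F=0$ for all components $F$ of $C_X$ forces $\supp\tilde Y\cap\supp C_X=\emptyset$'' is not justified as stated, and is not actually needed for the final inequality $\tilde Y\le g^*Y'+C$).

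The genuine gap is in your passage from (2) to (1). Part (2) has the hypothesis $ZC=-m<0$, so ``iterating part (2)'' breaks down at any stage where the contracted curve satisfies $Z_iE_i=0$ --- a case explicitly allowed by the theorem (which only assumes $I$ is represented, not minimally represented, on $X$) and explicitly flagged in its statement: ``$Z-K$ is not anti-nef if there is a $(-1)$-curve $C$ on $X$ with $ZC=0$.'' At such a stage the correct recursion is $Y_i=g_i^*Y_{i+1}$ (no $+E_i$ term), so unfolding gives $Y_1=\sum a_iF_i\lneq K$ rather than $Y_1=K$; your conclusion $Q\colon I=I_{Z-K}$ is then false as written, since $\cO_X(-Z+K)$ is not even generated. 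What remains true is only $Q\colon I=H^0(\cO_X(-Z+K))$, and this identification requires a separate argument, which the paper supplies: if $ZC=0$ one shows $H^0(\cO_X(-Z+K))=H^0(\cO_{X'}(-g_*Z+K'))$ using $0\to\cO_X(-Z+K-C)\to\cO_X(-Z+K)\to\cO_C(K)\to 0$ and $H^0(\cO_C(K))=H^0(\PP^1,\cO(-1))=0$. You need to add this case split (or restrict the induction to the cycle $Y=\sum a_iF_i$ and then compare $H^0$'s with $-Z+K$ at the end). A smaller point: your justification of the minimality of $I_{f_*Z}$ among good ideals containing $I_Z$ (``each intermediate ideal is not yet good'') does not address why an arbitrary good ideal containing $I_Z$ must contain $I_{f_*Z}$; the paper handles this via the iteration $I\mapsto Q\colon I$ of \remref{r:goodcl}.
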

\begin{proof}
Let $F_i$ be the total transform of $E_i$ on $X$.
Then $F_iF_j=0$ for $i\ne j$.
Let 
\[
b_i=-Z F_i, \quad a_i = \min\{1, b_i\}, \quad
a=\sum_{i=1}^n a_i, \quad \text{and} \quad b=\sum_{i=1}^n b_i.
\]
We will prove that a cycle $Y := \sum_{i=1}^n a_i F_i$ satisfies
$Q:I = I_{Z-Y}$.
Since $a_i^2=a_i$, we have 
\[
-Y^2+K_XY=\sum_{i=1}^n a_i (-F_i^2+K_XF_i)
=\sum_{i=1}^n a_i (-E_i^2+K_{X_i}E_i)=0.
\]
From \lemref{l:DD}, we have $h^1(\cO_Y(Y))=0$
and that $Y$ is the maximal cycle on $X$ satisfying $h^1(\cO_X(Y))=p_g(A)$.  
By \thmref{Q:I}, it is sufficient to show that $Z-Y$ is a $p_g$-cycle.
We have $f_* Z$ and $f^*f_* Z$ are $p_g$-cycles  by \proref{p:CX}
 and $Z=f^*f_* Z+\sum _{i=1}^n b_iF_i$. 
Take any decreasing sequence of cycles $\{Z_0, \dots, Z_b\}$ 
such that
\[
Z_0=Z, \quad Z_a=Z-Y, \quad Z_b=f^*f_* Z, \quad C_i := Z_i-Z_{i+1} \in \{F_1, \dots, F_n\}.
 \]
Then every $Z_i$ is anti-nef.
Let $h_i=h^1(\cO_X(-Z_i))$. Consider the exact sequence
\[
0 \to \cO_X(-Z_i) \to \cO_X(-Z_{i+1}) \to \cO_{C_i}(-Z_{i+1}) \to 0.
\]
Note that any nef invertible sheaf $\cL$ on $C_i$ is generated and satisfies $H^1(\cL)=0$ (cf. \cite{Li}).
Since $H^1(\cO_{C_i}(-Z_{i+1}))=0$, we have $h_i\ge h_{i+1}$; 
however the equality holds for every $0\le i <b$ because $h_b=h_0=p_g(A)$.
Thus $H^0(\cO_X(-Z_{i+1})) \to H^0(\cO_{C_i}(-Z_{i+1}))$ is surjective and $h_i=p_g(A)$ for each $0\le i <b$.
Thus $H^0(\cO_{X}(-Z_{i+1}))$ has no base points on $\supp C_i$. Therefore if $\cO_X(-Z_i)$ is generated, so is $\cO_X(-Z_{i+1})$.
Since $\cO_X(-Z_0)$ is generated, 
$\cO_X(-Z_i)$ is also generated and $Z_i$ is a $p_g$-cycle for every $0\le i\le b$. 
Hence $Q:I=I_{Z-Y}$.
Now it is clear that for any minimal reduction $Q^*$ of $I_{f_*Z}$, we have $Q^*:I_{f_*Z}=I_{f_*Z}$, namely, $I_{f_*Z}$ is a good ideal.
The argument above also shows this is the minimal good ideal containing $I_Z$.
If $ZC<0$ for every $(-1)$-curve $C$ on $X$ not intersecting $C_X$, then $a_i=1$ for 
every $1\le i \le n$, and thus $Y=K$.

Let $C=E_1$, $g=f_1$, and $X'=X_2$.
We will show that if $ZC=0$, we have 
$H^0(X, \cO_X(- Z +K)) = H^0(X', \cO_{X'}(- g_*Z + K'))$, 
where $K'=K_{X'/X_{n+1}}$.
This claim implies the formula $Q:I=H^0(\cO_X(-Z+K))$; in fact, it can be reduced to the case $n=1$.
We have that $g^*\cO_{X'}(-g_*Z+K')=\cO_X(-Z+K-C)$ and the exact sequence
$$
0 \to \cO_X(-Z+K-C) \to \cO_X(-Z+K) \to \cO_C(-Z+K) \to 0.
$$  
Since $ZC=0$, we have $\cO_C(-Z)\cong \cO_C$ and $H^0(\cO_C(-Z+K))\cong H^0(\cO_{\mathbb P^1}(-1))=0$. Hence we obtain the claim.
Assume that $m:=-ZC>0$.
By \proref{p:CX}, $g_*Z$ is a $p_g$-cycle on $X'$.
Let $I'=I_{g_*Z}$ and $Q'$ a minimal reduction of $I'$.
Let $Y'$ be the effective cycle on $X'$ such that $Q':I'=I_{g_*Z-Y'}$. 
Then $Y=g^*Y'+C$. 
Thus $Z-Y=g^*g_*Z+mC-(g^*Y'+C)=g^*(g_*Z-Y')+(m-1)C$.
\end{proof}

\begin{rem}\label{r:goodcl} 
(1) If $I=I_Z$ is a $p_g$-ideal, 
then the minimal good ideal containing $I$ (the ``good closure" of $I$) is obtained by ``taking $Q:I$'' 
several times (cf. \thmref{t:subset}). Namely, if $I_1= Q: I$ and $Q_1$ is a minimal reduction of 
$I_1$, then $I_2= Q_1: I_1$ is again a $p_g$-ideal.
We obtain  $Q_m: I_m = I_m$ for some $m>0$. 
The smallest positive integer $m$ with this property is given by $m=\max\{b_1, \dots, b_{n}\}$, where $b_i$'s are the integers in the proof of \thmref{algorithm}.

(2) If $A$ is a rational singularity, then $X_{n+1}$ in 
\thmref{algorithm} is always the minimal resolution since we may regard  $C_X= 0$ (cf. \cite[5.1]{OWYgood}).
If $A$ is a rational Gorenstein singularity (i.e., rational double point), then $K=K_X$ and $Q: I$ coincides with the multiplier ideal $\cal J(I)$. 
In this case, we have $\core(I)=\cal J(I^2)=I\cal J(I)$ by \lemref{l:core}.
\end{rem}

\begin{ex}
Let $X_0$ be the minimal resolution and $f\: X \to X_0$ the natural morphism. 
Assume that the exceptional divisor $F$ of $X_0$ is a nonsingular rational curve and $F=f(E_1)$.
For any ideal $I=I_Z$ represented by $Z=\sum n_i E_i$ on $X$, 
the good closure of $I$ is $I_{f_*Z}=I_{n_1F}=\m^{n_1}$ (cf. \cite{artin.rat}). Since good ideals in a rational singularity are 
integrally closed by \cite[2.4]{OWYgood}, the set of good ideals in $A$ is $\defset{\m^n}{n\in \Z_{>0}}$.

Let us consider an explicit example of the procedure in  \remref{r:goodcl} (1).
Suppose that $A=k[[x,y,z]]/(x^2-yz)$ and $X$ is obtained by the blowing up at the nodal points of $\di_{X_0}(x)$.
We write $E=E_0+E_1+E_2$, where $E_0^2=-4$ and $E_1^2=E_2^2=-1$.
Then $K:=K_{X/X_0}=E_1+E_2$ and $E$ is the maximal ideal cycle.
Let $W=E_0+2(E_1+E_2)$. Then $I_W=(x)+\m^2$ since $\ell_A(A/I_W)=3$ by Kato's Riemann-Roch formula. 
Fix an integer $n$ and let $Z=nE_0+2n(E_1+E_2)$.
For $0\le k \le n$, let $Z_k=nE_0+(2n-k)(E_1+E_2)$. 
Since $Z_k=(n-k)W+kE$, $I_k:=I_{Z_k}=((x)+\m^2)^{n-k}\m^k$.
From \thmref{algorithm}, for any minimal reduction $Q_k$ of $I_k$, we obtain that $Q_k:I_k=I_{k+1}$ for $0\le k<n$ and $Q_n:I_n=I_n$.
\end{ex}

The next theorem follows from \thmref{algorithm}.

\begin{thm}\label{t:good_ch}
There always exist good $p_g$-ideals in any 
two-dimensional excellent normal local domain containing an algebraically closed field. 

 Let $I = I_Z$ be a $p_g$-ideal minimally represented on $X$.   Then $I_Z$ is good if and only if every $(-1)$-curve on 
$X$ intersects the cohomology cycle $C_X$.
\end{thm}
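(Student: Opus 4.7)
The plan is to reduce both assertions to the machinery already developed in \thmref{algorithm} and \thmref{Q:I}. For the existence statement, I would start from any $p_g$-ideal $I_{Z_0}$ in $A$ (whose existence is known from \cite{OWYgood}, and may alternatively be constructed by blowing up sufficiently many generic points of $\supp C_{X_0}$ so as to achieve $\cO_{C_X}(-Z) \cong \cO_{C_X}$, which by \proref{p:CX} makes $Z$ a $p_g$-cycle). Applying \thmref{algorithm}(1) to $I_{Z_0}$ produces the contraction $f \colon X \to X_{n+1}$ of the chain of $(-1)$-curves missing the successive cohomological cycles; by that theorem $f_*Z_0$ is a $p_g$-cycle and $I_{f_*Z_0}$ is the minimal good ideal containing $I_{Z_0}$. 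Since \thmref{Q:I} guarantees that $Q:I$ remains a $p_g$-ideal whenever $I$ is, iterating the passage $I \mapsto Q:I$ finitely many times (cf.\ \remref{r:goodcl}) lands on this good closure, which is therefore itself a good $p_g$-ideal.

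For the characterization, let $I = I_Z$ be a $p_g$-ideal minimally represented on $X$. If every $(-1)$-curve of $X$ meets $C_X$, then the contraction sequence in \thmref{algorithm}(1) is already terminated at $X$ itself, so $n=0$ and the relative canonical divisor $K = K_{X/X_{n+1}}$ vanishes. Consequently
\[
Q : I = H^0(X, \cO_X(-Z + K)) = H^0(X, \cO_X(-Z)) = I,
\]
so $I$ is good.

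Conversely, suppose some $(-1)$-curve $E$ on $X$ satisfies $E \cdot C_X = 0$. Since $I$ is minimally represented on $X$, we must have $ZE < 0$, so in the notation of the proof of \thmref{algorithm} the integer $b_1 = -ZE > 0$ and hence $a_1 = \min\{1, b_1\} = 1$. Thus the cycle $Y = \sum a_i F_i$ constructed there satisfies $Y \ge F_1 > 0$, and \thmref{Q:I} yields $Q : I = I_{Z - Y} \supsetneq I$, so $I$ is not good.

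The main obstacle is securing the very first $p_g$-ideal to feed into the algorithm; once this is in hand (from \cite{OWYgood}, or via the blow-up construction sketched above), both statements follow mechanically. The subtle point in the reverse direction of the characterization is the correct use of the minimal-representation hypothesis: it forces the strict inequality $ZE < 0$ for \emph{every} $(-1)$-curve $E$ on $X$, which is what lets us conclude $a_1 = 1$ and therefore $Y \neq 0$ as soon as a single $(-1)$-curve avoids $C_X$.
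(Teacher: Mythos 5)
Your proposal is correct and follows exactly the route the paper intends: the theorem is stated as an immediate corollary of \thmref{algorithm} (whose part (1) already asserts both the existence of good ideals and the formula $Q:I=H^0(\cO_X(-Z+K))$), and your case analysis on whether the cycle $Y=\sum a_iF_i$ from that proof is zero, using the minimal-representation hypothesis to force $a_1=1$ when some $(-1)$-curve misses $C_X$, is precisely the intended argument. No gaps.
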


\begin{cor}
Assume that $I$ is a $p_g$-ideal.
Then
 $I$ is a good ideal if and only if 
 $\core(II')=\core(I)\core(I')$ for any $p_g$-ideal $I'$.
\end{cor}
\begin{proof}
Assume that $I$ and $I'$ are represented on $X$ by $p_g$-cycles $Z$ and $Z'$, respectively, and that $I$ is a good ideal.
By \thmref{t:good_ch}, we may assume that $Z'C<0$ for every $(-1)$-curve $C$ on $X$ which does not intersect $C_X$; then $Z+Z'$ has also this property.
Let $Q'$ (resp. $Q''$) be a minimal reduction of $I'$ (resp. $II'$).
By \thmref{algorithm} (1), we obtain that $Q'':II'=I_{Z+Z'-K}$ and $Q':I'=I_{Z'-K}$. 
Since $\core (I)=I^2=I_{2Z}$ by \corref{c:cores}, it follows from  \thmref{Q:I} that
\[
\core(II')=I_{2(Z+Z')-K}=I_{2Z+(2Z'-K)}=\core(I)\core(I').
\]

The converse follows from \corref{c:cores}.
\end{proof}

From (1) of Theorem \ref{algorithm} and \lemref{l:core}, 
we have the following result concerning Question \ref{Q1} of \S 2.

\begin{thm}\label{t:subset}  Let $I_1\subset I_2$ be $p_g$-ideals,
 and let $Q_i$ be a minimal  reduction of $I_i$ ($i=1,2$). 
Then $Q_1:I_1\subset Q_2:I_2$ and also 
$\core(I_1)\subset \core(I_2)$.
\end{thm}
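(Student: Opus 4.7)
The plan is to reduce everything to the explicit formula for $Q:I$ given in \thmref{algorithm}(1), then combine it with the identity $\core(I) = (Q:I)I$ from \lemref{l:core}.

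First, I would represent both $p_g$-ideals on a common resolution $X\to\Spec A$, chosen to dominate resolutions on which $I_1$ and $I_2$ are represented. The pullback of a $p_g$-cycle along a birational map of resolutions remains a $p_g$-cycle (by the projection formula and the vanishing $R^1\pi_*\cO=0$ for birational morphisms of smooth surfaces, together with \proref{p:CX}), so we may write $I_i=I_{Z_i}$ with $Z_i$ a $p_g$-cycle on $X$. The inclusion $I_1\subset I_2$ translates, upon tensoring with $\cO_X$, to $\cO_X(-Z_1)\subset \cO_X(-Z_2)$ and hence to $Z_1\ge Z_2$.

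The crucial step is then to invoke \thmref{algorithm}(1). Contract the $(-1)$-curves on $X$ not meeting the cohomological cycle to obtain $X\to X_{n+1}$, and set $K=K_{X/X_{n+1}}$. The key observation is that $K$ depends only on $X$, not on $Z_1$ or $Z_2$, so \thmref{algorithm}(1) yields the uniform formulas
\[
Q_i : I_i \;=\; H^0\bigl(X,\,\cO_X(-Z_i+K)\bigr),\qquad i=1,2.
\]
From $Z_1\ge Z_2$ we have $-Z_1+K\le -Z_2+K$, whence $\cO_X(-Z_1+K)\subset \cO_X(-Z_2+K)$ as subsheaves of the constant sheaf of rational functions on $X$, and passing to global sections gives $Q_1:I_1\subset Q_2:I_2$.

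For the core inclusion, \corref{c:sg} says that every $p_g$-ideal is stable, so \lemref{l:core} yields $\core(I_i)=(Q_i:I_i)\,I_i$ for $i=1,2$. Combining $I_1\subset I_2$ with the colon inclusion just proved gives $\core(I_1)=(Q_1:I_1)I_1 \subset (Q_2:I_2)I_2 = \core(I_2)$. The only delicate point in the argument is the initial reduction to a common resolution — specifically, verifying that pullback preserves the $p_g$-property — but after this, the rest of the proof is essentially formal consequences of the explicit formulas already in hand.
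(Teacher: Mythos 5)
Your proof is correct and follows essentially the same route as the paper, which derives \thmref{t:subset} precisely by combining the formula $Q:I=H^0(X,\cO_X(-Z+K))$ from \thmref{algorithm}(1) (with $K$ depending only on the common resolution $X$) with the identity $\core(I)=(Q:I)I$ from \lemref{l:core}; you have merely spelled out the routine reductions (common resolution, $I_1\subset I_2\Rightarrow Z_1\ge Z_2$) that the paper leaves implicit.
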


\begin{quest}
Let $(A,\frm)$ be a two-dimensional excellent normal local domain and let 
$I_1\subset I_2$ be  integrally closed $\m$-primary ideals and 
let $Q_i$ be a minimal  reduction of $I_i$ for $i=1,2$. 
Assume that $I_2$ is a $p_g$-ideal (we do not assume that $I_1$ is a $p_g$-ideal).
Is  it  true that $\core(I_1)\subset \core(I_2)$ and 
that $Q_1: I_1 \subset Q_2:I_2$?
\end{quest}

\section{Existence of $p_g$-ideal containing a given element of $I$.}\label{s:pgexs}

We show that for any $g\in \m$ there exists a $p_g$-ideal containing $g$. In fact, we prove the following.

\begin{thm}
\label{t:pgf}
Let $I$ be an integrally closed $\m$-primary ideal and $g$ an arbitrary element of $I$. 
Then there exists $h\in I$ such that the integral closure of the ideal $(g,h)$ is a $p_g$-ideal. 
\end{thm}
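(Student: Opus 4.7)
The plan is to engineer, from the single element $g$, a resolution $f\colon X\to \Spec A$ and a $p_g$-cycle $W$ on $X$ such that $g$ realizes $\cO_X(-W)$ as a ``transverse'' line bundle---its zero divisor, viewed as a section of $\cO_X(-W)$, will be a purely horizontal curve missing the cohomological cycle $C_X$. A generic $h\in I_W$ will then satisfy $(g,h)\cO_X=\cO_X(-W)$ on all of $X$, whence $\overline{(g,h)}=H^0(X,\cO_X(-W))=I_W$, a $p_g$-ideal. The case $p_g(A)=0$ is immediate---$A$ is then rational and every $\m$-primary integrally closed ideal is automatically a $p_g$-ideal---so I henceforth assume $p_g(A)>0$ and $g\ne 0$.

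First I represent $I=I_Z$ on a resolution $X_0$ and split $\di_{X_0}(g)=E_g+H_g$ into exceptional and horizontal parts. I then construct a sequence $X=X_n\to\cdots\to X_0$ in which each $\pi_{i+1}\colon X_{i+1}\to X_i$ is the blow-up at a point $p_i\in H_g^{(i)}\cap C_{X_i}$, with $H_g^{(i)}$ the strict transform of $H_g$ on $X_i$. For such a blow-up, \proref{p:CC} gives $C_{X_{i+1}}=\pi_{i+1}^{*}C_{X_i}-E_{p_i}$, and a direct computation yields $H_g^{(i+1)}\cdot C_{X_{i+1}}=H_g^{(i)}\cdot C_{X_i}-\mult_{p_i}H_g^{(i)}$. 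Since the intersection remains a nonnegative integer (both cycles are effective without common components) and drops by at least one at each step, the process terminates with $H_g^X:=H_g^{(n)}$ disjoint from $C_X:=C_{X_n}$.

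Next I set $W:=\di_X(g)-H_g^X$, an effective cycle supported on the exceptional divisor $E$ of $X\to \Spec A$. Because $\di_X(g)\cdot E_i=0$ for every exceptional component $E_i$, one has $W\cdot E_i=-H_g^X\cdot E_i\le 0$, so $W$ is anti-nef. Viewed as a section of $\cO_X(-W)$, the element $g$ has zero divisor $H_g^X$, which contains no $E_i$ and meets $C_X$ nowhere; consequently $\cO_X(-W)$ has no fixed component along any $E_i$, and $g|_{C_X}$ is a nowhere-vanishing section of $\cO_{C_X}(-W)$, giving $\cO_{C_X}(-W)\cong \cO_{C_X}$. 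By \proref{p:CX}, $W$ is then a $p_g$-cycle and $I_W$ is a $p_g$-ideal; the inequality $W\ge \pi^*Z$ forced by $\di_X(g)\ge \pi^*Z$ (where $\pi$ denotes the composite $X\to X_0$) gives $I_W\subset I$.

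To finish, I choose $h\in I_W$ generically. Since $\cO_X(-W)$ is globally generated, for any $q\in X$ the sections of $I_W$ vanishing at $q$ form a proper subspace; avoiding the finite set $H_g^X\cap E$ and the finitely many height-one primes of $A$ through which $g$ vanishes thus excludes only finitely many proper subspaces, so a general $h\in I_W$ will simultaneously make $(g,h)$ $\m$-primary and arrange that the zero divisor $H_h^X$ of $h$ avoids $H_g^X\cap E$. The $\m$-primary condition forces $V_X(g)\cap V_X(h)\subset E$, hence $H_g^X\cap H_h^X\subset H_g^X\cap E$, which is then empty; so at every point of $X$ at least one of $g,h$ is a local generator of $\cO_X(-W)$, and $(g,h)\cO_X=\cO_X(-W)$. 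Invertibility of $(g,h)\cO_X$ on $X$ then gives $\overline{(g,h)}=H^0(X,\cO_X(-W))=I_W$. I expect the principal obstacle to be the first step: executing the iterated blow-up and verifying its termination when $C_X$ is non-reduced and $H_g$ has tangential contact with $C_X$ requires careful bookkeeping via \proref{p:CC} together with the intersection-number decrease computation above.
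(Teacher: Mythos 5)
Your proposal is correct and follows essentially the same route as the paper: decompose $\di_X(g)$ into its exceptional and horizontal parts, blow up successively at the intersection of the horizontal part with the cohomological cycle (using Proposition~\ref{p:CC}) until they are disjoint, conclude via Proposition~\ref{p:CX} that the exceptional part is a $p_g$-cycle, and take a general second generator $h$. The only difference is that you spell out the termination of the blow-up process and the genericity argument for $h$, which the paper leaves implicit.
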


\begin{proof}
Suppose that $I$ is represented 
by a cycle $Z>0$ on a resolution $X$. 
We write $\di_X(g)=Z_X+H_X$, where $Z_X$ is a cycle and $H_X$ is
the strict transform of the divisor $H:=\di_{\spec A}(g)$.
Let $C_X$ be the cohomological cycle on $X$.
Then, by \proref{p:CX}, $Z_X$ is a $p_g$-cycle if and only if 
\begin{equation}\label{eq:CcapH}
\supp C_X \cap \supp H_X=\emptyset.
\end{equation} 
By \proref{p:CC}, taking blowing-ups at the intersection of the cohomological cycle and the strict transform of $H$ successively, 
we may assume that the condition \eqref{eq:CcapH} is satisfied.
Since $Z_X\ge Z$, we have $I_{Z_X}\subset I$.
Since $\cO_X(-Z_{X})$ is generated, there exists $h\in I_{Z_X}$ such that $g$ and $h$ generate $\cO_X(-Z_X)$.  
Then we obtain that $I_{Z_X}=\ol{(g,h)}$.
\end{proof}

The idea for obtaining the $p_g$-ideal in the proof above is used in the last paragraph of \exref{ex:244}, where the element ``$h\in I$'' in \thmref{t:pgf} can be $g+h'$ with general element $h'\in \ol{\m^3}$.

\begin{defn}\label{d:pgass}
Let $h\in \m$. There exists a unique resolution $f\: X\to \spec A$ such that $C_X$ and the strict transform $H\subset X$ of $\di_{\spec A}(h)$ have no intersection, and that $f$ is minimal among the resolutions with these properties.
Note that such a resolution is obtained by taking blowing-ups successively from the minimal resolution.
Then $Z:=\di _X(h)-H$ is a $p_g$-cycle by \proref{p:CX}
 and every $(-1)$-curve $C$ on $X$ satisfies $ZC<0$ and 
$C_XC<0$.
We call $I_Z$ the {\em $p_g$-ideal associated with $h$}.
\end{defn}

From now on, we assume that the following condition holds for any 
integrally closed $\frm$-primary ideal $I\subset A$:
\begin{enumerate}
\item [(C1)]
If $h\in I$ is a general element and $I$ is represented by $Z$ on $X$, and if $\di _X(h)=Z+H$, then the divisor $H$ is a disjoint union of 
nonsingular curves and each component of $H$ intersects the exceptional set transversally.
\end{enumerate}
This condition holds in case the singularity is defined over a field of characteristic zero, since Bertini's theorem can be applied to the image of $H^0(\cO_X(-Z))\to H^0(\cO_{E_i}(-Z))$.

\begin{thm}\label{t:colength}
Let $J = I_M$ be an integrally closed $\frm$-primary ideal
represented by a cycle $M$ on a resolution $Y\to \spec A$, which is 
obtained by taking the minimal resolution of the blowing-up of
 $J$.
Assume that $h\in J$ satisfies the condition 
$\di_Y(h)=M+H$, where $H$ is the strict transform of 
$\di_{\Spec(A)}(h)$ on $Y$.

If $I$ is the $p_g$-ideal associated to $h$, 
then we have 

\[
e(I)-e(J)\ge C_Y H = -C_YM  \quad \text{and} 
\quad \ell_A(J/I) \le -C_YM,
\]
where the latter inequality is strict if $J$ is not a 
$p_g$-ideal.

If, moreover, $h\in J$ is a general element, then 
we have the equalities 
\[e(I)-e(J) = C_Y H = -C_YM, \quad 
\ell_A(J/I) = C_Y H - \left(p_g(A) - h^1(\cO_Y(-M))\right)
\]
and in this case $I$ is a maximal $p_g$-ideal 
contained in $J$. 

In particular, every $p_g$-ideal is associated with its general element.
\end{thm}

\begin{proof} First note that by definition, $J$ is a $p_g$-ideal if $p_g(A)-h^1(\cO_Y(-M))=0$, and this condition is equivalent to that $C_YM=0$ by \proref{p:CX}.

We prove $e(I)-e(J)\ge C_Y H = -C_YM$  by induction on 
$C_Y H = -C_YM$. Our assertion is obvious if  $C_Y H = 0$.

In order to construct a resolution $X \to Y$ such that $I = I_Z$ is represented by a cycle $Z$ on $X$,
we proceed by successive blowing-ups at points 
\[X= Y_r \to Y_{r-1} \to \ldots \to Y_1\to Y_0=Y\]
and take anti-nef cycles  $Z_0=M, Z_1,\ldots , Z_r =Z$ 
so that 
\[\di_{Y_i}(h) = Z_i + H_i,\]
where $H_i$ is the strict transform of $H$ on $Y_i$. 
Note that $C_X H = 0$. 

We assume $C_Y H > 0$ and take a point $P\in C_Y\cap H$.
We also assume that $H$ has multiplicity $a$ at $P$.
Let $\pi : Y_1\to Y$ be the blowing-up at $P$ and put 
$E_0 = \pi^{-1}(P)$.
Then $Z_1 = \pi^*(M) + aE_0$, 
since $\di_{Y_1}(h) = \pi^*(\di_Y(h)) =\pi^*(M) + aE_0 + H_1
=Z_1 + H_1$.
We have seen in Proposition \ref{p:CC} that 
$C_{Y_1} = \pi^*(C_Y) - E_0$. Since $C_{Y_1}H_1
= C_YH -a < C_Y H$, we may assume $-Z^2 - (-Z_1)^2 \ge 
C_{Y_1}H_1$ by our induction hypothesis.  
Also, we have $(-Z_1^2) - (-M^2) = a^2$.  Hence we have
\[(-Z_1^2) - (-M^2) \ge C_Y H - C_{Y_1}H_1 \]
with equality if and only if $a=1$. 
 \par
If $h\in J$ is a general element, by the condition (C1), we have $a=1$ at every point 
$P\in C_Y\cap H$ and hence equality $e(I) - e(J) = C_Y H$.
Furthermore, from the exact sequence 
\[ 0 \to \cO_{Y_1}( - Z_1) \to \cO_{Y_1} ( - \pi^*(M)) \to 
\cO_{E_0} \to 0,\]
we have $I_{1}:=H^0(Y_1, \cO_{Y_1}(-Z_1)) \subset I_M$ with $\ell_A( I_M/I_1)\le 1$ and 
$h^1(Y_1, \cO_{Y_1}(-Z_1)) = h^1( Y, \cO_{Y}(-M))$ or 
$h^1(Y_1, \cO_{Y_1}(-Z_1)) = h^1( Y, \cO_{Y}(-M))+1$,   the 
latter holds if and only if $I_1 =  I_M$.  
Hence we have the equality 
\[
\ell_A(J/I) = C_Y H - \left(p_g(A) - h^1(\cO_Y(-M))\right).
\]

Now, assume $e(I) - e(J)= C_Y H$ and  $I'$ is a $p_g$-ideal such that 
$I\subset I'\subset J$.  We may assume that $I=I_Z$ and $I'= I_{Z'}$  with $Z\ge Z'$ 
are represented on a resolution $X$ and we have a morphism $\phi\: X \to Y$.  Since 
$I'\subset J$, we have $Z'\ge \phi^*(M)$. 
Since $M=\phi_*(Z) \ge \phi_*(Z') \ge \phi_*(\phi^*(M))=M$, 
we must have $\phi_*(Z')= M$. \par
Take a general element $h'\in I'$ so that $\di_X(h') = Z'+ \tilde{H}'$, where $\tilde{H}'$ is the strict transform of 
$\di_{\Spec(A)}(h')$ on $X$.
Then we have $\di_Y(h') = \phi_*(\di_X(h')) = M + H'$, where $H'= \phi_*(\tilde{H}')$, 
and we have $C_YH'= - C_Y M$. Since $I'$ is a $p_g$-ideal, we must 
have $C_{X}\tilde{H}'=0$. By the argument as above,  we have 
$e(I') - e(J) \ge C_Y H'=-C_YM = e(I) - e(J)$.  Since $I \subset I'$, we have $I'=I$ and 
we have shown that $I$ is a maximal $p_g$-ideal contained in $J$.
\end{proof}

\begin{cor}
Let $I$ be a maximal $p_g$-ideal of $A$.  Then $I$ is a good ideal with 
$e(I) = e(\m) - C_X M_X$
and 
$\mu_A(I):=\ell_A(I/I\m)=e(\m)+1$, where $f \: X\to \Spec A$ is any resolution such that $\m$ and $I$ are represented on $X$ and $\m \cO_X=\cO_X(-M_X)$.
\par
Furthermore if $A$ is Gorenstein, then 
$$
\ell_A(A/I)=e(I)/2=e(\m) + p_a(M_X)-1.
$$
\end{cor}

\begin{proof}
We use the notation above and assume that $J=I_M=\m$.
By \remref{r:goodcl} (1), $I$ is a good ideal.
Let  $\phi\: X \to Y$ be the natural morphism.

Since $\phi^*M=M_X$ and $\phi^*C_Y=C_X+D$ for some cycle $D\ge 0$ with $\phi_*(D)=0$ by \proref{p:CC}, we have 
 $C_YM=(C_X+D)(\phi^*M)=C_X(\phi^*M)=C_XM_X$.
Hence the first equality follows from \thmref{t:colength}.
Since $Z=\phi^*M+D'$ with a  cycle $D'\ge 0$ satisfying $\phi_*(D)=0$
 and $Z M_X=(\phi^*M)^2=M^2$, it follows from \cite[6.2]{OWYgood} that $\mu_A(I)=-M^2+1=e(\m)+1$.

Suppose that $A$ is Gorenstein.
Then we may assume that $K_X$ and $K_Y$ are cycles.
It follows from \remref{r:Ggood} and \cite[4.2]{OWYgood} that  $\ell_A(A/I)=-Z^2/2$ and $K_X Z=0$.
By \proref{p:CC}, 
there exists an effective cycle $C'$ such that $K_Y=-C_Y+C'$ 
and $\supp C_Y \cap \supp C' = \emptyset$, 
and $K_X+C_X=\phi^*(K_Y+C_Y)$.
Then we obtain that
\[
(K_Y+C_Y)M=\phi^*(K_Y+C_Y)Z=(K_X+C_X)Z=0.
\]
Thus $C_Y M=-K_Y M$. 
Using the formulas above and $p_a(M)=(M^2+K_YM)/2+1$, we obtain 
\[
e(I)=-M^2+K_Y M=2(-M^2)+M^2+K_Y M=2(e(\m)+p_a(M)-1).
\]
Clearly, $p_a(M)=p_a(M_X)$, since $\phi^*M=M_X$. 
Hence we obtain the last assertion. 
\end{proof}

\begin{ex}
Let $C$ be a nonsingular curve with genus $g$ and $D$ a divisor on $C$ with $\deg D=e>0$.
Let $R=\bigoplus_{n\ge 0}R_n$, where $R_n=H^0(\cO_C(nD))$, 
 and $\m=\bigoplus_{n\ge 1}R_n$. 
Let $a=a(R)$, where 
$$
a(R)=\max\defset{n\in \Z}{H^1(\cO_C(nD))\ne 0} \ \ \text{(cf. \cite[(2.2)]{KeiWat-D})}.
$$ 
Assume that $a\ge 0$, $aD\sim K_C$, and that $R$ is generated by $R_1$ as $R_0$-algebra. 
Then $R$ is Gorenstein by \cite[(2.9)]{KeiWat-D}.
Let $Y\to \spec R$ be the blowing up of $\m$ and $E\subset Y$ the exceptional set; we have $E\cong C$.
Then $Y$ is the minimal resolution with $E^2=-e$ and $\m$ is represented by $M:=E$.

Since $R$ is Gorenstein, there exists an integer $k$ such that $K_Y=-kE$. From $(K_Y+E)E=\deg K_C=ae$, we have $K_Y=-C_Y=-(a+1)E$.
We have 
\[
-M^2=e, \quad C_YM=(a+1)e.
\]
Let $I$ be a $p_g$-ideal associated with a general element $h\in \m$. Since $p_a(M)=g$ and $h^1(\cO_C((a+1)D))=0$,
from \thmref{t:colength}, we obtain that 
$$
\mu_R(I)=e+1, \quad  \ell_R(R/I)=e+g-1=h^0(\cO_C((a+1)D)).
$$

We will show that $I=\m^{a+2}+(h)$.
Let $X\to \spec R$ ($A$ is replaced by $R$), $Z$, and $H$ be as in \defref{d:pgass}, and let  $\phi\: X\to Y$ be the natural morphism.
To obtain $X$ from $Y$, we need $a+1$ blowing ups at the intersection of the exceptional set and each irreducible component of the strict transform of $\di_{\spec R}(h)$ (cf. the proof of \thmref{t:pgf}).
Since the coefficient of the irreducible component of the exceptional set of $X$ intersecting $H$ in the cycle $Z$ 
is $a+2$, we have  $(a+2)\phi^*M\ge Z$.
Therefore, $\m^{a+2}+(h)\subset I$.
Note that $R_n\cong \m^n/\m^{n+1}$ and $\ell_R(R/\m^{a+2})=\sum_{n=0}^{a+1}\dim R_n$.
Since $h\in \m\setminus \m^2$ and $\bigoplus _{n\ge 0}\m^n/\m^{n+1}$ is a domain, 
the homomorphism
$$
R/\m^{a+1} \xrightarrow{\ \ \times h \ \ } (h)+\m^{a+2}/\m^{a+2}
$$
is bijective. Therefore 
\[
\ell_R(R/(h)+\m^{a+2}) =\ell_R(\m^{a+1}/\m^{a+2}) =\ell_R(R_{a+1})=h^0(\cO_C((a+1)D))
=\ell_R(R/I).
\]
Hence $I=(h)+\m^{a+2}$.
\end{ex}

\begin{acknowledgement} 
 The authors thank the referee for careful reading of the paper and 
helpful comments.
\end{acknowledgement}


\providecommand{\bysame}{\leavevmode\hbox to3em{\hrulefill}\thinspace}
\providecommand{\MR}{\relax\ifhmode\unskip\space\fi MR }
\providecommand{\MRhref}[2]{%
  \href{http://www.ams.org/mathscinet-getitem?mr=#1}{#2}
}
\providecommand{\href}[2]{#2}

\end{document}